\definecolor{extlinkz}{rgb}{0,0,1}
\definecolor{intlinkz}{rgb}{1,0,0}
\definecolor{citlinkz}{rgb}{0,.6,.1}
\newtheorem{theo}{Theorem}
\newtheorem{defi}[theo]{Definition}
\newtheorem{lem}[theo]{Lemma}
\newtheorem{prop}[theo]{Proposition}
\newtheorem{cor}[theo]{Corollary}
\newtheorem{rem}[theo]{Remark}
\newcommand{\wh}[1]{\widehat{#1}}
\newcommand{\ol}[1]{\overline{#1}}
\newcommand{\R}{\mathbb{R}}
\newcommand{\Z}{\mathbb{Z}}
\newcommand{\T}{\mathbb{T}}
\newcommand{\C}{\mathbb{C}}
\newcommand{\N}{\mathbb{N}}
\newcommand{\F}{\mathcal{F}}
\newcommand{\Hil}{\mathcal{H}}
\newcommand{\ind}{\mathcal{I}}
\newcommand{\psis}[1]{\langle #1\rangle_\Gamma}
\newcommand{\vsp}{\textnormal{span}}
\newcommand{\Ran}{\textnormal{Ran}}
\newcommand{\Ker}{\textnormal{Ker}}
\newcommand{\rr}{\rho}
\newcommand{\vn}{\mathscr{R}}
\newcommand{\lr}{\lambda}
\newcommand{\vnL}{\mathscr{L}}
\newcommand{\id}{{\mathrm{e}}}
\newcommand{\orb}{{\mathcal{O}_{\Gamma}(\psi)}}
\newcommand{\g}{{g_{{}_\orb}}}
\newcommand{\Sy}{\textnormal{T}}
\newcommand{\An}{\Sy^*}
\newcommand{\Grop}{\mathfrak{G}}
\newcommand{\Gram}{\mathcal{G}}
\newcommand{\Frame}{\mathfrak{F}}
\newcommand{\Id}{\mathbb{I}}
\newcommand{\Proj}{\mathbb{P}}
\definecolor{highlight}{rgb}{1,0,.6}
\definecolor{suggest}{rgb}{0,.7,.9}
\title{Group Riesz and Frame Sequences:\\ The Bracket and the Gramian.}
\author{Davide Barbieri, Eugenio Hern\'andez, Victoria Paternostro}
\begin{document}

\maketitle

\vspace{-6pt}
\begin{abstract}
Given a discrete group and a unitary representation on a Hilbert space $\Hil$, we prove that the notions of operator Bracket map and Gramian coincide on a dense set of $\Hil$. As a consequence, combining this result with known frame theory, we can recover in a simple way all previous Bracket characterizations of Riesz and frame sequences generated by a single element under a unitary representation.\\
\emph{Keywords}: Riesz and frame sequences; group von Neumann algebras; Bracket map; Gramian operator.
\end{abstract}

\section{Introduction} \label{Intro}

Riesz and frame sequences in a separable Hilbert space $\Hil$ are key objects in approximation theory. The following definitions are standard and can be found e.g. in \cite{Daubechies92, Meyer93, HW96, Christensen03}. For a countable set of indices $\ind$ consider the family  $\Psi = \{\psi_j\}_{j \in \ind} \subset \Hil$ and call $\Hil_\Psi = \ol{\vsp(\Psi)}^\Hil$.  The system $\Psi$ is said to be a \textbf{Riesz sequence} with Riesz bounds $0 < A \leq B < \infty$ if it satisfies
\begin{equation}\label{eq:Riesz}
A \|c\|^2_{\ell_2(\ind)} \leq \|\sum_{j \in \ind} c_j\psi_j\|^2_{\Hil} \leq B \|c\|^2_{\ell_2(\ind)}
\end{equation}
for all finite sequence $c = \{c_j\}_{j \in \ind} \in \ell_0(\ind)$. Since finite sequences are dense in $\ell_2(\ind)$, this condition is equivalent to say that (\ref{eq:Riesz}) holds for all $c \in \ell_2(\ind)$.  Recall that a Riesz sequence is a \emph{Riesz basis for} $\Hil_\Psi$.

The system $\Psi$ is said to be a \textbf{frame sequence} with frame bounds $0 < A \leq B < \infty$ if it satisfies
\begin{equation}\label{eq:frames}
A \|\varphi\|^2_{\Hil} \leq \sum_{j \in \ind} |\langle \varphi, \psi_j \rangle_{\Hil} |^2 \leq B \|\varphi\|^2_{\Hil}, \quad \mbox{for all}\ \varphi \in \Hil_\Psi.
\end{equation}
 Recall that a frame sequence is a \emph{frame for} $\Hil_\Psi$.

\

Particular systems, that play an important role in the construction of wavelets from a Multiresolution Analysis, in the study of Gabor systems, and in approximation theory, are formed by translations of a single function $\psi\in L^2(\R^d).$ For $k\in \Z^d$ denote by $T_k\psi (x) =\psi(x-k)$ the translation of $\psi$ by the integer $d$-tuple $k$.

When does $\{T_k \psi: k \in \Z^n\}$ constitute a Riesz sequence in $L^2(\R^d)$? When does $\{T_k \psi: k \in \Z^n\}$  constitute a frame sequence in $L^2(\R^d)$?

A crucial object to answer this questions is the Bracket map, that is the sesquilinear map from $L^2(\R^n) \times L^2(\R^n)$ to $L^1([0,1])$ that reads
\begin{equation}\label{eq:originalBracket}
[\varphi,\psi](\xi) = \sum_{j \in \Z^n} \wh{\varphi}(\xi + j) \ol{\wh{\psi}(\xi + j)} ,
\end{equation}
where $\wh{\varphi}$ denotes the Fourier transform of $\varphi$.
In terms of this Bracket map the answers to this questions is the following:
\begin{prop}\label{prop:1}
	Let $\psi \in L^2(\R^d)$ and fix constants $0 < A \leq B < +\infty$. 
	\begin{itemize}
		\item[i.] The collection $\{T_k\psi : k \in \Z \}$ is a Riesz sequence with bounds $0 < A \leq B < \infty$ if and only if $A \leq [\psi , \psi](\xi) \leq B$ a.e. $\xi \in [0,1].$
		\item[ii.] The collection $\{T_k\psi : k \in \Z\}$ is a frame sequence with bounds $0 < A \leq B < \infty$ if and only if $A \leq [\psi , \psi](\xi) \leq B$ a.e. $\xi \in \mbox{supp}\, [\psi, \psi].$
	\end{itemize}
\end{prop}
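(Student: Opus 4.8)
The plan is to pass to the Fourier side, where integer translation becomes modulation, and then to periodize so that the Bracket map emerges as the single object controlling both inequalities. Writing $e_k(\xi) = e^{2\pi i k\cdot\xi}$, we have $\wh{T_k\psi}(\xi) = \ol{e_k(\xi)}\,\wh\psi(\xi)$, and for a finite sequence $c=\{c_k\}$ we set $m_c(\xi) = \sum_k c_k\,\ol{e_k(\xi)}$, a $\Z^d$-periodic trigonometric polynomial with $\|c\|_{\ell_2}^2 = \int_{[0,1]^d}|m_c|^2\,d\xi$ by Parseval for Fourier series.

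For part (i), I would compute, via Plancherel and the periodicity of $m_c$,
\begin{equation}
\Bigl\|\sum_k c_k T_k\psi\Bigr\|_\Hil^2 = \int_{\R^d}|m_c(\xi)|^2|\wh\psi(\xi)|^2\,d\xi = \int_{[0,1]^d}|m_c(\xi)|^2\,[\psi,\psi](\xi)\,d\xi,
\end{equation}
the last step by folding the integral over $\R^d$ into $[0,1]^d$ and recognizing the periodization of $|\wh\psi|^2$ as $[\psi,\psi]$. Thus (\ref{eq:Riesz}) becomes $A\int_{[0,1]^d}|m_c|^2\,d\xi \le \int_{[0,1]^d}|m_c|^2\,[\psi,\psi]\,d\xi \le B\int_{[0,1]^d}|m_c|^2\,d\xi$. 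The equivalence with $A\le[\psi,\psi]\le B$ a.e. then follows from a standard density argument: one direction is immediate, and for the converse one tests against $|m_c|^2$ approximating the indicator of a level set of $[\psi,\psi]$, using that nonnegative trigonometric polynomials (which by Fej\'er--Riesz are exactly the $|m_c|^2$) are dense among nonnegative $L^1$ functions on the torus; this argument simultaneously forces $[\psi,\psi]\in L^\infty$.

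For part (ii), I would first parametrize the principal shift-invariant space $\Hil_\Psi$: every $\varphi\in\Hil_\Psi$ has $\wh\varphi = m\,\wh\psi$ for a $\Z^d$-periodic multiplier $m$, whence by periodicity $[\varphi,\psi] = m\,[\psi,\psi]$ and, as in (i), $\|\varphi\|_\Hil^2 = \int_{[0,1]^d}|m|^2[\psi,\psi]\,d\xi$. On the other hand $\langle\varphi,T_k\psi\rangle_\Hil$ is precisely the $k$-th Fourier coefficient of the periodic function $[\varphi,\psi]$, so Parseval yields
\begin{equation}
\sum_k|\langle\varphi,T_k\psi\rangle_\Hil|^2 = \int_{[0,1]^d}|[\varphi,\psi]|^2\,d\xi = \int_{[0,1]^d}|m|^2\,[\psi,\psi]^2\,d\xi .
\end{equation}
Writing $\Omega=\mathrm{supp}\,[\psi,\psi]$ and substituting $h=|m|^2[\psi,\psi]\ge 0$ (supported in $\Omega$), the frame inequality (\ref{eq:frames}) reads $A\int_\Omega h\le\int_\Omega h\,[\psi,\psi]\le B\int_\Omega h$ for $h$ ranging over a dense cone, equivalent to $A\le[\psi,\psi]\le B$ a.e. on $\Omega$ by the same level-set argument as in (i).

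The main obstacle I anticipate is technical rather than conceptual: since $[\psi,\psi]$ is a priori only in $L^1([0,1]^d)$, the manipulations in (ii)---in particular the appearance of $[\psi,\psi]^2$ and the substitution $h=|m|^2[\psi,\psi]$---must be justified by restricting to multipliers $m$ for which the integrals are finite, together with a verification that this class is rich enough to recover the pointwise bounds. Care is also needed to confirm that $\varphi\mapsto m$ identifies $\Hil_\Psi$ isometrically with the weighted space $L^2(\Omega,[\psi,\psi]\,d\xi)$, so that the Fourier-coefficient identity holds for \emph{every} element of $\Hil_\Psi$ and not merely for finite linear combinations of the $T_k\psi$.
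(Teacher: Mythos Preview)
Your argument is essentially the classical one and is correct in outline, but it is not the route this paper takes. The paper does not prove Proposition~\ref{prop:1} directly at all: it simply cites \cite{Meyer93, Daubechies92, Bow00} for part~$i.$ and \cite{BenedettoLi93, BenedettoWalnut94, BenedettoLi98, Bow00} for part~$ii.$ The paper's own contribution is a different, more abstract derivation that recovers Proposition~\ref{prop:1} as a special case: it shows (Theorem~\ref{prop:G=B}) that for any dual integrable representation the operator Bracket $[\psi,\psi]$ coincides with the Gramian $\Grop_\orb$, characterizes Riesz and frame sequences purely via the spectrum of the Gramian (Section~\ref{GramianFrames}, especially Proposition~\ref{lem:duallemma} and Corollary~\ref{cor:framegramian}), and then, in the abelian case, transports the resulting operator inequalities to pointwise inequalities on $\wh\Gamma$ via the multiplier map $\Lambda$ (Section~\ref{abelian}). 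In particular, the frame case is handled through the spectral equivalence $\sigma(\Grop_\Psi)\subset\{0\}\cup[A,B]$, bypassing entirely the explicit parametrization of $\Hil_\Psi$ by multipliers and the integral $\int|m|^2[\psi,\psi]^2$ that you compute. Your approach is more hands-on and self-contained for integer translations; the paper's buys uniformity (the same argument works for any discrete group, abelian or not) and, as the authors stress, avoids the ``surprisingly intricate'' density and integrability arguments that the classical proofs of part~$ii.$ require and that you yourself flag as the main obstacle.

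One small correction: your appeal to Fej\'er--Riesz to identify nonnegative trigonometric polynomials with single squares $|m_c|^2$ holds only for $d=1$; in higher dimensions there exist nonnegative trigonometric polynomials that are not a single modulus-square. This does not break the argument, since all you actually need is that the functions $|m_c|^2$ are rich enough to separate level sets of $[\psi,\psi]$ in $L^1$, which is true in any dimension (for instance by taking $m_c$ to be Fej\'er-type approximations to characteristic functions).
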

The proof of part $i.$ of Proposition \ref*{prop:1} can be found in classical books on wavelets such as \cite{Meyer93, Daubechies92} and in \cite{Bow00}. As for the proof of part $ii.$ it can be found in \cite{BenedettoLi93,BenedettoWalnut94,BenedettoLi98, Bow00}.

\

Dilations by dyadic integers, modulations and shear transformations (see \cite{Zak-SIS} for definitions) are other examples for which a similar result to the one contained in Proposition \ref{prop:1} can be given. All these are examples of unitary group representations. A unitary representation of a group $\Gamma$ is a continuous group homomorphism $\Pi: \Gamma \longrightarrow U(\Hil)$, where $U(\Hil)$ denotes the set of unitary operators of the Hilbert space $\Hil$.

Given a unitary representation $\Pi$ of a discrete group $\Gamma$, and a nonzero $\psi \in \Hil$, two natural questions are then the following. When does $\{\Pi(\gamma) \psi: \gamma \in \Gamma\}$ constitute a Riesz sequence in $\Hil$? When does $\{\Pi(\gamma) \psi: \gamma \in \Gamma\}$ constitute a frame sequence in $\Hil$? The answer to these questions is given for the abelian case in \cite{HSWW10} and for the nonabelian case in \cite{BHP14}. The answers are formally different, but it is expressed in both cases in terms of an important object that generalizes the Bracket map (\ref{eq:originalBracket}). We review now the abelian case.

Suppose that $\Gamma$ is an abelian discrete and countable group. Let $\wh{\Gamma}$ be the dual group of $\Gamma$, that is, the group of all characters of $\Gamma$ defined as the continuous maps $\alpha: \Gamma \longrightarrow \C$ for which $|\alpha(\gamma)|=1$ for all $\gamma\in \Gamma$ and $\alpha(\gamma_1 \gamma_2)= \alpha(\gamma_1)\alpha(\gamma_2)$ for all $\gamma_1, \gamma_2 \in \Gamma$. Since $\Gamma$ is discrete, $\wh \Gamma$ is compact. Denote by $d\alpha$ the normalized Haar measure on $\wh{\Gamma}$. A unitary representation $\Pi: \Gamma \longrightarrow U(\Hil)$ is said to be \textbf{dual integrable} if there exists a sesquilinear map
$$
[\cdot , \cdot] : \Hil \times \Hil \rightarrow L^1(\wh{\Gamma}, d\alpha)
$$
such that
\begin{equation} \label{eq:Bracketabelian}
\big\langle \varphi, \Pi(\gamma) \psi \big\rangle_{\Hil} = \int_{\widehat{\Gamma}} [\varphi, \psi](\alpha)\alpha(\gamma) d\alpha \quad \forall \ \varphi, \psi \in \Hil \quad \forall \ \gamma \in \Gamma.
\end{equation}

\noindent Let us illustrate this definition with two basic examples. In the case of integer translations, the Bracket is given by equation (\ref{eq:originalBracket}) (see \cite{BoorVoreRon93, BoorVoreRon94, BenedettoLi98}).
For the case of the Gabor representation of the group $(\Z^d \times \Z^d,+)$ on $L^2(\R^d)$ given by $M_l T_k \psi(x) = e^{-2\pi i l\cdot x} \psi(x - k)$, the Bracket is $[\varphi, \psi](x,\xi) = Z\varphi(x,\xi)\overline{Z\psi(x,\xi)}$, where $x,\xi \in \T^d$, $\varphi, \psi \in L^2(\R^d)$ and
	\begin{equation} \label{eq:Zakclassical}
	Z\psi(x,\xi) = \sum_{k \in \Z^d}  \psi(k + x)\, e^{-2\pi i k \cdot \xi}
	\end{equation}
	is the Zak transform of the function $\psi \in L^2(\R^d)$ (see for instance \cite{HeilPowell06}).

For a dual integrable representation the answer to the above questions is similar to the one contained in Proposition \ref{prop:1}.

\begin{prop} (\cite{HSWW10})\label{prop:conmutative}
Let $\Pi$ be a dual integrable representation of a discrete and countable  abelian group $\Gamma$ on a Hilbert space $\Hil$.	Let $\psi \in \Hil$ and fix constants $0 < A \leq B < +\infty$. 
\begin{itemize}
\item[i.] The collection is a Riesz sequence in $\Hil$ with bounds $0 < A \leq B < \infty$ if and only if $A \leq [\psi , \psi](\alpha) \leq B$ a.e. $\alpha \in \wh \Gamma.$
\item[ii.] The collection $\{\Pi(\gamma)\psi : \gamma \in \Gamma \}$  is a frame sequence in $\Hil$ with bounds $0 < A \leq B < \infty$ if and only if $A \leq [\psi , \psi](\alpha) \leq B$ a.e. $\alpha \in \textnormal{supp}\, [\psi, \psi].$
\end{itemize}
\end{prop}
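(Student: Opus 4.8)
The plan is to transport the whole problem to the dual group $\wh\Gamma$ via Fourier analysis, reducing both parts to an elementary spectral bound for a multiplication operator. Throughout write $w = [\psi,\psi]\in L^1(\wh\Gamma,d\alpha)$, set $e_\gamma(\alpha) := \alpha(\gamma)$, and for a finitely supported $c = \{c_\gamma\}\in\ell_0(\Gamma)$ put $F_c(\alpha) = \sum_\gamma c_\gamma\,\ol{\alpha(\gamma)}$. The basic ingredient is the standard fact that, $\Gamma$ being the (discrete) dual of the compact group $\wh\Gamma$, the characters $\{e_\gamma\}_{\gamma\in\Gamma}$ form an orthonormal basis of $L^2(\wh\Gamma,d\alpha)$; in particular $\|F_c\|_{L^2}^2 = \|c\|^2_{\ell_2}$ and trigonometric polynomials are dense in $L^2(\wh\Gamma,d\alpha)$. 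Next I would record the covariance of the Bracket: using unitarity of $\Pi$, commutativity of $\Gamma$, the defining identity (\ref{eq:Bracketabelian}) and uniqueness of Fourier coefficients of $L^1$ functions, one gets $[\Pi(\gamma)\varphi,\psi](\alpha) = \ol{\alpha(\gamma)}\,[\varphi,\psi](\alpha)$, hence $[\varphi_c,\psi] = F_c\,w$ and $[\varphi_c,\varphi_c] = |F_c|^2 w$ for $\varphi_c = \sum_\gamma c_\gamma\Pi(\gamma)\psi$. Taking $\gamma = \id$ in (\ref{eq:Bracketabelian}) gives $\|\varphi\|^2_\Hil = \int_{\wh\Gamma}[\varphi,\varphi]\,d\alpha$, so that
\begin{equation*}
\Big\|\sum_\gamma c_\gamma\Pi(\gamma)\psi\Big\|^2_\Hil = \int_{\wh\Gamma} w(\alpha)\,|F_c(\alpha)|^2\,d\alpha ;
\end{equation*}
testing against $|F_c|^2$ approximating characteristic functions also yields $w\ge 0$ a.e.

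For part (i), combine the displayed identity with $\|F_c\|^2_{L^2} = \|c\|^2_{\ell_2}$: the Riesz inequalities (\ref{eq:Riesz}) are exactly
\begin{equation*}
A\int_{\wh\Gamma}|F_c|^2\,d\alpha \;\le\; \int_{\wh\Gamma} w\,|F_c|^2\,d\alpha \;\le\; B\int_{\wh\Gamma}|F_c|^2\,d\alpha
\end{equation*}
for every $c\in\ell_0(\Gamma)$, i.e.\ for every trigonometric polynomial $F_c$. By density these persist for all $F\in L^2(\wh\Gamma,d\alpha)$, which is precisely the statement $A\,\Id\le M_w\le B\,\Id$ for the multiplication operator $M_w$, equivalently $A\le w\le B$ a.e. The condition is sufficient by integrating the pointwise inequality, and necessary by choosing $F$ approximating $\chi_E$ on a hypothetical positive-measure set $E$ where $w>B$ (resp. $w<A$).

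For part (ii) I would first evaluate the analysis coefficients. Since $\langle\varphi,\Pi(\gamma)\psi\rangle_\Hil = \int_{\wh\Gamma}[\varphi,\psi]\,e_\gamma\,d\alpha$ is a Fourier coefficient of $[\varphi,\psi]$, the Parseval identity together with the $L^1$-uniqueness theorem (which forces a non-$L^2$ bracket to have non-square-summable coefficients) gives
\begin{equation*}
\sum_{\gamma\in\Gamma}\big|\langle\varphi,\Pi(\gamma)\psi\rangle_\Hil\big|^2 = \big\|[\varphi,\psi]\big\|^2_{L^2(\wh\Gamma)} \in [0,+\infty].
\end{equation*}
The identity for $\|\varphi_c\|^2$ above shows that $\varphi_c\mapsto F_c$ extends to an isometry $\iso$ of $\Hil_\psi := \csp\{\Pi(\gamma)\psi:\gamma\in\Gamma\}$ onto $L^2(\wh\Gamma,w\,d\alpha)$ (surjective because trigonometric polynomials are dense there), under which $[\varphi,\psi] = (\iso\varphi)\,w$. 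Writing $G = \iso\varphi$ and $d\mu = w\,d\alpha$ (a finite measure), the frame inequalities (\ref{eq:frames}) become
\begin{equation*}
A\int_{\wh\Gamma}|G|^2\,d\mu \;\le\; \int_{\wh\Gamma} w\,|G|^2\,d\mu \;\le\; B\int_{\wh\Gamma}|G|^2\,d\mu , \qquad \forall\, G\in L^2(\mu).
\end{equation*}
This is again the multiplication bound $A\,\Id\le M_w\le B\,\Id$, now on $L^2(\mu)$, hence $A\le w\le B$ holds $\mu$-a.e.; since $\mu$ is supported exactly on $\{w>0\}$, this is the asserted bound a.e. on $\textnormal{supp}\,[\psi,\psi]$.

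The main obstacle is part (ii), and specifically two measure-theoretic points. First, one must justify that $\sum_\gamma|\langle\varphi,\Pi(\gamma)\psi\rangle|^2 = \|[\varphi,\psi]\|^2_{L^2}$ even when the bracket lies only in $L^1\setminus L^2$, so that it is the finiteness of the upper frame bound that forces $[\varphi,\psi]\in L^2$ in the first place. Second, one must construct the isometry $\iso$ and prove its surjectivity onto the weighted space $L^2(w\,d\alpha)$, which is exactly what permits inserting the test functions $\chi_E$ needed for the necessity direction. Once $\iso$ is in place, the passage from ``$w\ge 0$ a.e.'' to ``$\mu$ is concentrated on $\{w>0\}$'' and the identification of $\mu$-a.e.\ with Lebesgue-a.e.\ on the support are routine.
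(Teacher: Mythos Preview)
Your argument is correct and follows the classical direct route (essentially that of \cite{HSWW10}): transport everything to $\wh\Gamma$ via the Bracket identity, realize $\psis{\psi}$ as the weighted space $L^2(\wh\Gamma, w\,d\alpha)$ through the helical isometry $\iso$, and read off both the Riesz and the frame conditions as spectral bounds for the multiplication operator $M_w$. You have also correctly isolated the two nontrivial points in part~(ii): the Parseval identity $\sum_\gamma|\langle\varphi,\Pi(\gamma)\psi\rangle|^2 = \|[\varphi,\psi]\|_{L^2}^2$ valid in $[0,+\infty]$ (your $L^1$-uniqueness argument is exactly right), and the surjectivity of $\iso$ onto $L^2(w\,d\alpha)$ (density of trigonometric polynomials in $L^2$ of any finite measure on the compact group $\wh\Gamma$).

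The paper takes a genuinely different path. It never builds the weighted-$L^2$ model of $\psis{\psi}$. Instead it first proves, for an \emph{arbitrary} discrete group $\Gamma$, that the operator Bracket $[\psi,\psi]$ equals the Gramian $\Grop_\orb$ as closed operators on $\ell_2(\Gamma)$ (Theorem~\ref{prop:G=B}); the standard Gramian characterizations of Riesz and frame sequences (Section~\ref{GramianFrames}, in particular Corollary~\ref{cor:framegramian}) then immediately yield the operator inequalities of Proposition~\ref{cor:principalcharacterization}. Only at the very end, and only for abelian $\Gamma$, does the multiplier isomorphism $\Lambda: L^p(\vn(\Gamma))\to L^p(\wh\Gamma)$ of Section~\ref{abelian} convert these operator inequalities into the pointwise bounds on $\wh\Gamma$ (Proposition~\ref{prop:LCA5} together with Lemmas~\ref{lem:inequality} and~\ref{lem:LCA4}). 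Your approach is more elementary and entirely self-contained in the abelian setting, avoiding the noncommutative $L^p$ machinery; the paper's approach, by contrast, obtains the abelian statement as a specialization of a single result valid for all discrete groups, and makes explicit the conceptual identification of the Bracket with the Gramian of the orbit.
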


\

The nonabelian case requires more machinery that was introduced in (\cite{BHP14}) and will be reviewed in Section \ref{Section2}. In this paper we will show that this Bracket notion for the nonabelian case coincides with the well-known Gramian, that can be defined for any countable family $\{\psi_j\}_{j \in \ind}$ satisfying some weak square integrability conditions as the closed and densely defined operator on $\ell_2(\ind)$ whose matrix coefficients are the inner products $\langle \psi_j, \psi_i\rangle$.

In order to do so, we will start recalling in Section  \ref{Section2} the notion of operator Bracket map  introduced in \cite{BHP14} in terms of the group von Neumann algebra. In Section \ref{sec:basicresults} the Gramian is defined for a countable family under the weakest possible conditions. In Section \ref{GramianFrames} we provide a simple argument for the characterization of general frame sequences  (see Corollary \ref{cor:framegramian}), as a consequence of Proposition \ref{lem:duallemma}. The main result of the paper is contained in Section \ref{Braket=Gramian}, where we show that the operator Bracket map introduced in \cite{BHP14} for any discrete groups coincides with the Gramian. In Section \ref{abelian} we then show that, for abelian groups, operator Bracket map and the Bracket map (\ref{eq:Bracketabelian}) are related by Pontryagin duality. As corollaries we obtain all known characterizations of Riesz and frame sequences for principal invariant subspaces; this gives simple proofs of these results, hence avoiding the - as the authors term them - ``surprisingly intricate'' arguments invoked in \cite{BenedettoLi98} and still present in all subsequent works.

\section{Preliminaries} \label{Section2}

\subsection{The nonconmutative setting and the Bracket map} \label{Subsection2-1}

In this section, we will describe the setting where we will work in and recall the notion of our main object of study: the operator Bracket map, introduced in \cite{BHP14}.

Given $\Gamma$  a discrete and countable group 
 we will need to deal with Fourier analysis over the von Neumann algebra associated to $\Gamma$ which we briefly recall here (for details see  \cite{KadisonRingrose83, Conway00, Connes94, Takesaki03, PisierXu03, JumgeMeiParcet} and the discussion in \cite{BHP14}). In this case, we  shall consider 
the \emph{right} von Neumann algebra of $\Gamma$, defined as follows. Let $\rr : \Gamma \to U(\ell_2(\Gamma))$ be the right regular representation, which acts on the canonical basis $\{\delta_\gamma\}_{\gamma \in \Gamma}$ as $\rr(\gamma)\delta_{\gamma'} = \delta_{\gamma'\gamma^{-1}}$, and let us call \emph{trigonometric
	polynomials} the operators obtained by finite linear combinations of $\{\rr(\gamma)\}_{\gamma \in \Gamma}$. The right von Neumann algebra can be defined as the weak operator closure of such trigonometric polynomials
$$
\vn(\Gamma) = \ol{\vsp\{\rr(\gamma)\}_{\gamma \in \Gamma}}^{\textrm{WOT}}.
$$
Given $F \in \vn(\Gamma)$, we will denote by $\tau$ the standard trace
$$
\tau(F) = \langle F \delta_\id, \delta_\id \rangle_{\ell_2(\Gamma)}
$$
where $\id$ denotes the identity element of $\Gamma$.  Then, $\tau$ defines a normal, finite and faithful tracial linear functional.

For  $F \in \vn(\Gamma)$, the Fourier coefficients of $F$, $\{\wh{F}(\gamma)\}_{\gamma\in\Gamma} \in \ell_2(\Gamma)$, are given by
\begin{equation}\label{eq:Fouriercoefficients}
\wh{F}(\gamma) = \tau(F \rr(\gamma)), 
\end{equation}
and $F$ has a \emph{Fourier series} 
$
F = \sum_{\gamma \in \Gamma} \wh{F}(\gamma) \rr(\gamma)^* \, ,
$
which converges in the weak operator topology.

Any $F \in \vn(\Gamma)$ is a bounded right convolution operator by $\wh{F}$: given $u \in \ell_2(\Gamma)$
$$
Fu(\gamma) = \sum_{\gamma' \in \Gamma} \wh{F}(\gamma')\rr(\gamma')^*u(\gamma) = \sum_{\gamma' \in \Gamma} \wh{F}(\gamma')u(\gamma\gamma'^{-1}) = u \ast \wh{F}(\gamma)\vspace{-4pt}
$$
where $\ast$ stands for  the $\Gamma$-group convolution
$$
u \ast v (\gamma) = \sum_{\gamma' \in \Gamma} u(\gamma\gamma'^{-1})v(\gamma') = \sum_{\gamma' \in \Gamma} u(\gamma') v(\gamma'^{-1}\gamma) .\vspace{-4pt}
$$

For any $1 \leq p < \infty$ let $\|\cdot\|_p$ be the norm over $\vn(\Gamma)$ given by\vspace{-4pt}
$$
\|F\|_p = \tau(|F|^p)^\frac1p\vspace{-4pt}
$$
where the absolute value is the selfadjoint operator defined as $|F| = \sqrt{F^*F}$ and the $p$-th power is defined by functional calculus of $|F|$. Following \cite{Nelson74, PisierXu03, BHP14}, we define the noncommutative $L^p(\vn(\Gamma))$ spaces for $1 \leq p < \infty$ as
$$
L^p(\vn(\Gamma)) = \ol{\vsp\{\rr(\gamma)\}_{\gamma \in \Gamma}}^{\|\cdot\|_p}
$$
while for $p = \infty$ we set $L^\infty(\vn(\Gamma)) = \vn(\Gamma)$ endowed with the operator norm.

When $p < \infty$, the elements of $L^p(\vn(\Gamma))$ are the linear operators on $\ell_2(\Gamma)$ that are affiliated to $\vn(\Gamma)$, i.e. the densely defined closed operators that commute with all unitary elements of $\vnL(\Gamma)$, whose $\|\cdot\|_p$ norm is finite (see also \cite{Terp81}). Here, $\vnL(\Gamma)$ denotes the \emph{left} von Neumann algebra of $\ell_2(\Gamma)$, that is generated by the left regular representation $\lr: \Gamma \to U(\ell_2(\Gamma))$, defined by $\lr(\gamma)\delta_{\gamma'} = \delta_{\gamma\gamma'}$. 

In particular, $L^p(\vn(\Gamma))$ elements for $p < \infty$ are not necessarily bounded, while a bounded operator that is affiliated to $\vn(\Gamma)$ automatically belongs to $\vn(\Gamma)$ as a consequence of von Neumann's Double Commutant Theorem (see also \cite[Th. 4.1.7]{KadisonRingrose83}). For $p = 2$ one obtains a separable Hilbert space with scalar product\vspace{-3pt}
$$
\langle F_1, F_2\rangle_2 = \tau(F_2^* F_1)\vspace{-3pt}
$$
for which the monomials $\{\rr(\gamma)\}_{\gamma \in \Gamma}$ form an orthonormal basis.
For these spaces the usual statement of H\"older inequality still holds, so that in particular for any $F \in L^p(\vn(\Gamma))$ with $1 \leq p \leq \infty$ its Fourier coefficients are well defined, and the finiteness of the trace implies that $L^p(\vn(\Gamma)) \subset L^q(\vn(\Gamma))$ whenever $q < p$. Moreover, fundamental results of Fourier analysis such as $L^1(\vn(\Gamma))$ Uniqueness Theorem, Plancherel Theorem between $L^2(\vn(\Gamma))$ and $\ell_2(\Gamma)$, and Hausdorff-Young inequality still hold in the present setting (see e.g. \cite[\S 2.2]{BHP14}).

A relevant class of operators in $\vn(\Gamma)$ are the orthogonal projections onto closed subspaces $W$ of $\ell_2(\Gamma)$ 
 satisfying $\lr(\Gamma)W \subset W$, for all $\gamma\in\Gamma$. 
 A special case is the spectral projection over the set $\R \setminus \{0\}$, that is called the support of $F$. It is the minimal orthogonal projection $s_F$ of $\ell_2(\Gamma)$ such that $F = F s_F = s_F F$, and reads explicitly
\begin{equation}\label{eq:support}
s_F = \Proj_{(\Ker(F))^\bot} = \Proj_{\ol{\Ran(F)}} .
\end{equation}

Let us now recall the following  definition from \cite{HSWW10, BHP14} which is essential in this paper. 
\begin{defi}\label{noncommutativeBracket}
	Let $\Pi$ be a unitary representation of a discrete and coutable group $\Gamma$ on a separable Hilbert space $\Hil$. We say that $\Pi$ is \emph{dual integrable} if there exists a sesquilinear map $[\cdot,\cdot] : \Hil \times \Hil \to L^1(\vn(\Gamma))$, called operator \emph{Bracket map}, satisfying
	$$
	\langle \varphi, \Pi(\gamma)\psi\rangle_\Hil = \tau([\varphi,\psi]\rr(\gamma)) \quad \forall \, \varphi, \psi \in \Hil \, , \ \forall \, \gamma \in \Gamma .
	$$
	In such a case we will call $(\Gamma,\Pi,\Hil)$ a \emph{dual integrable triple}.
\end{defi}
Note that $[\varphi,\psi]$ is the object in $ L^1(\vn(\Gamma))$ which Fourier coefficites are $\{\langle \varphi, \Pi(\gamma)\psi\rangle_\Hil\big\}_{\gamma \in \Gamma}$.
 
According to \cite[Th. 4.1]{BHP14}, $\Pi$ is dual integrable if and only if it is square integrable, in the sense that there exists a dense subspace $\mathcal{D}$ of $\Hil$ such that
$$
\big\{\langle \varphi, \Pi(\gamma)\psi\rangle_\Hil\big\}_{\gamma \in \Gamma} \in \ell_2(\Gamma) \quad \forall \, \varphi \in \Hil \, , \ \forall \, \psi \in \mathcal{D}.
$$

\subsection{Gramian and frame operators}\label{sec:basicresults}

In this subsection we will provide some fundamental definitions and results concerning the key operators involved in the study of Riesz bases and frames.  For $\Psi = \{\psi_j\}_{j \in \ind} \subset \Hil$, the \emph{synthesis operator} of $\Psi$ is the densely defined operator from $\ell_2(\ind)$ to $\Hil$ which, on finite sequences, reads
$$
\begin{array}{rccc}
\Sy_\Psi : & \ell_0(\ind) & \rightarrow & \vsp(\Psi) \subset \Hil\vspace{4pt}\\
& c & \mapsto & \displaystyle\sum_{j \in \ind} c_j \psi_j .
\end{array}
$$

Its adjoint operator $\An\Psi$ (see e.g. \cite[Chapter X, \S 1]{Conway90}) is called the \emph{analysis operator} of $\Psi.$ It is the closed operator defined on $Dom(\An\Psi)=\{\varphi \in \Hil:\,\, c \mapsto \langle T_\psi c, \varphi \rangle_\Hil \textrm{ is a bounded linear functional on }Dom(\Sy_\Psi)= \ell_0(\ind)\}$, as
$$
\langle c, \An_\Psi \varphi \rangle_{\ell_2(\ind)} = \langle \Sy_\Psi c , \varphi \rangle_\Hil= \langle \sum_{j \in \ind} c_j \psi_j, \varphi\rangle_\Hil = \sum_{j \in \ind} c_j \langle \psi_j, \varphi\rangle_\Hil
$$	
$c\in \ell_0(\ind)$ and reads explicitly
$$
\begin{array}{rccc}
\An_\Psi : & Dom(\An_\Psi) & \rightarrow & \ell_{2}(\ind)\vspace{4pt}\\
& \varphi & \mapsto & \big\{\langle\varphi,\psi_j\rangle_\Hil\big\}_{j \in \ind}\,.
\end{array}
$$

\begin{lem}\label{lem:generalsquareintegrability}
Let $\Psi = \{\psi_j\}_{j \in \ind} \subset \Hil$ be a countable family. The analysis operator $\An_\Psi$ maps $\vsp(\Psi)$ to $\ell_2(\ind)$ if and only if 
the family $\Psi = \{\psi_j\}_{j \in \ind}$  satisfies the following square integrability condition: \begin{equation}\label{eq:generalsquareintegrability}
t_j = \sum_{k \in \ind} |\langle \psi_j, \psi_k\rangle_\Hil|^2 \ \textnormal{is finite for all} \ j \in \ind .
\end{equation}

\end{lem}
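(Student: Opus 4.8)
The plan is to reduce the equivalence to the single numerical identity $\|\An_\Psi\psi_j\|_{\ell_2(\ind)}^2=t_j$ together with the elementary fact that the domain of a linear operator is a linear subspace. First I would unpack the hypothesis. Since $\An_\Psi$ is by definition the adjoint of the synthesis operator $\Sy_\Psi$, its domain consists precisely of those $\varphi\in\Hil$ for which the functional $c\mapsto\langle\Sy_\Psi c,\varphi\rangle_\Hil=\sum_{j}c_j\langle\psi_j,\varphi\rangle_\Hil$ is bounded on $\ell_0(\ind)$ in the $\ell_2$-norm; by the Riesz representation theorem this holds exactly when $\{\langle\varphi,\psi_j\rangle_\Hil\}_{j\in\ind}\in\ell_2(\ind)$. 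Hence the statement that ``$\An_\Psi$ maps $\vsp(\Psi)$ to $\ell_2(\ind)$'' is nothing but the inclusion $\vsp(\Psi)\subset Dom(\An_\Psi)$.

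For the forward implication I would simply evaluate this inclusion on the generators. Each $\psi_j$ lies in $\vsp(\Psi)\subset Dom(\An_\Psi)$, so $\An_\Psi\psi_j=\{\langle\psi_j,\psi_k\rangle_\Hil\}_{k\in\ind}\in\ell_2(\ind)$, and taking norms yields $\sum_{k\in\ind}|\langle\psi_j,\psi_k\rangle_\Hil|^2=t_j<\infty$, which is exactly the condition (\ref{eq:generalsquareintegrability}). For the converse, assume each $t_j$ is finite. The same norm computation shows $\An_\Psi\psi_j\in\ell_2(\ind)$, i.e.\ $\psi_j\in Dom(\An_\Psi)$ for every $j\in\ind$. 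Since $\An_\Psi$ is linear its domain is a subspace of $\Hil$, and a subspace containing every $\psi_j$ must contain their finite linear span; explicitly, for $\varphi=\sum_{j\in F}a_j\psi_j$ with $F\subset\ind$ finite, sesquilinearity gives $\{\langle\varphi,\psi_k\rangle_\Hil\}_{k}=\sum_{j\in F}a_j\{\langle\psi_j,\psi_k\rangle_\Hil\}_{k}$, a finite combination of $\ell_2(\ind)$ sequences and therefore itself in $\ell_2(\ind)$. This gives $\vsp(\Psi)\subset Dom(\An_\Psi)$, completing the argument.

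I do not expect a genuine obstacle here; the only step requiring care is the bookkeeping in the first paragraph, namely the correct identification of $Dom(\An_\Psi)$ as the set of $\varphi$ whose coefficient sequence is square-summable. Once that is in place, both directions follow immediately from the identity $\|\An_\Psi\psi_j\|_{\ell_2(\ind)}^2=t_j$ and the linearity of the domain, so that no delicate convergence or density considerations are needed beyond the finiteness of the index sets appearing in the linear combinations.
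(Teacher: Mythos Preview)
Your proposal is correct and follows essentially the same approach as the paper: both reduce the inclusion $\vsp(\Psi)\subset Dom(\An_\Psi)$ to checking membership of each generator $\psi_j$, and both directions hinge on the identity $\|\An_\Psi\psi_j\|_{\ell_2(\ind)}^2=t_j$. The only cosmetic difference is that the paper verifies $\psi_k\in Dom(\An_\Psi)$ by an explicit Cauchy--Schwarz bound on the functional $c\mapsto\langle\Sy_\Psi c,\psi_k\rangle_\Hil$, whereas you invoke the Riesz representation characterization of $Dom(\An_\Psi)$ first and then read off the same conclusion.
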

\begin{proof}
Let us first assume (\ref{eq:generalsquareintegrability}). We have to show the inclusion  $ \vsp(\Psi) \subset Dom(\An_\Psi)$. Since $Dom(\An_\Psi)$ is a linear subspace of $\Hil$ it is enough to show that $\psi_k$ belongs to $Dom(\An_\Psi)$ for all $k \in \ind.$ For $c\in \ell_0(\ind)$ the computation
\begin{align*}
|\langle \Sy_\Psi c , \psi_k \rangle_\Hil| & = |\langle \sum_{j\in \ind} c_j \psi_j , \psi_k \rangle_\Hil | \leq \sum_{j\in \ind} |c_j| |\langle \psi_j , \psi_k \rangle_\Hil | \\
& \leq (\sum_{j\in \ind} |c_j|^2)^{1/2}\, (\sum_{j\in \ind} |\langle \psi_j , \psi_k \rangle_\Hil|^2)^{1/2} = \|c\|_{\ell_2(\ind)} t_k^\frac12\, ,
\end{align*}
shows that the map $c \mapsto \langle \An_\Psi c , \psi_k \rangle_\Hil$ is a bounded linear functional on $Dom(\Sy_\Psi)=\ell_0(\ind)$. Hence $\psi_k \in Dom(\An_\Psi)$.
	
Conversely, since $\An_\Psi : \vsp(\Psi) \to \ell_2(\ind)$, then in particular
\begin{displaymath}
\|\An_\Psi \psi_j\|_{\ell_2(\ind)}^2 = t_j < \infty \quad \forall \, j \in \ind. \qedhere
\end{displaymath}
\end{proof}

Assuming (\ref{eq:generalsquareintegrability}), by composition of analysis and synthesis operators one obtains the \emph{Gramian} associated to $\Psi$ as a densely defined operator on $\ell_2(\ind)$ that on $\ell_0(\ind)$ reads
$$
\begin{array}{rccc}
\Grop_\Psi = \An_\Psi \Sy_\Psi : & \ell_0(\ind) & \rightarrow & \ell_2(\ind)\vspace{4pt}\\
& c & \mapsto & \displaystyle\Big\{\langle \sum_{j \in \ind} c_j \psi_j,\psi_k\rangle_\Hil\Big\}_{k \in \ind} .
\end{array}
$$
Its name is motivated by the observation that
$$
\Big(\Grop_\Psi c\Big)_k = \sum_{j \in \ind} c_j \langle \psi_j,\psi_k\rangle_\Hil  = \sum_{j \in \ind} \Gram_\Psi^{k,j} c_j \ , \quad k \in \ind
$$
where $\Gram_\Psi = (\Gram_\Psi^{k,j}) = (\langle \psi_j, \psi_k\rangle_\Hil)$ is the Gram matrix of $\Psi$ (note the ordering of indices).

\begin{cor}\label{cor:closableGram}
	Let $\Psi = \{\psi_j\}_{j \in \ind} \subset \Hil$ be a countable family for which  (\ref{eq:generalsquareintegrability}) holds. Then, the Gramian is a closable densely defined operator on $\ell_2(\ind)$ whose domain contains finite sequences, i.e.
	$
	\Grop_\Psi : \ell_0(\ind) \rightarrow \ell_2(\ind) .
	$
\end{cor}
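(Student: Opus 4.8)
The plan is to verify the three assertions of the statement in turn: that $\Grop_\Psi$ is well defined on $\ell_0(\ind)$ with values in $\ell_2(\ind)$, that it is consequently densely defined, and that it is closable. The first two parts will drop out of Lemma \ref{lem:generalsquareintegrability} with essentially no work, while the third calls for a short symmetry computation, which is the only point deserving attention.

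First I would check that $\Grop_\Psi$ maps $\ell_0(\ind)$ into $\ell_2(\ind)$. For $c \in \ell_0(\ind)$ the synthesis operator produces the finite linear combination $\Sy_\Psi c = \sum_{j \in \ind} c_j \psi_j \in \vsp(\Psi)$. Under the hypothesis (\ref{eq:generalsquareintegrability}), Lemma \ref{lem:generalsquareintegrability} guarantees that $\An_\Psi$ sends $\vsp(\Psi)$ into $\ell_2(\ind)$, so the composite $\Grop_\Psi c = \An_\Psi \Sy_\Psi c$ lies in $\ell_2(\ind)$. Since $\ell_0(\ind)$ is dense in $\ell_2(\ind)$, this already shows that $\Grop_\Psi$ is densely defined with domain containing the finite sequences, settling the first two claims.

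For closability I would show that $\Grop_\Psi$ is symmetric on $\ell_0(\ind)$. Expanding both inner products for $c, d \in \ell_0(\ind)$, and using the conjugate symmetry $\ol{\langle \psi_k, \psi_j\rangle_\Hil} = \langle \psi_j, \psi_k\rangle_\Hil$, gives
$$
\langle \Grop_\Psi c, d\rangle_{\ell_2(\ind)} = \sum_{j,k \in \ind} c_j \ol{d_k}\, \langle \psi_j, \psi_k\rangle_\Hil = \langle c, \Grop_\Psi d\rangle_{\ell_2(\ind)} ,
$$
where all sums are finite because $c$ and $d$ have finite support, so no convergence issue arises. This symmetry means $\Grop_\Psi \subseteq \Grop_\Psi^*$, whence $\Grop_\Psi^*$ is densely defined; since a densely defined operator is closable precisely when its adjoint is densely defined, $\Grop_\Psi$ is closable.

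The argument is essentially routine, and the only step requiring a moment's care is the symmetry identity, where one must keep track of the ordering of indices in the Gram matrix $\Gram_\Psi^{k,j} = \langle \psi_j, \psi_k\rangle_\Hil$ so as to see that the conjugation appearing in $\langle c, \Grop_\Psi d\rangle$ reproduces exactly the summand of $\langle \Grop_\Psi c, d\rangle$. An alternative to the symmetry route would be to exploit $\Grop_\Psi = \An_\Psi \Sy_\Psi = \Sy_\Psi^*\Sy_\Psi$ together with the fact that adjoints are automatically closed, but since $\Sy_\Psi$ need not be closed on $\ell_0(\ind)$ this requires more bookkeeping, and the direct symmetry computation is the cleaner path.
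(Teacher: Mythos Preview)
Your proof is correct but follows a different route from the paper's. Both arguments dispatch the first two points identically via Lemma \ref{lem:generalsquareintegrability}; the divergence is in the closability step. You observe that $\Grop_\Psi$ is symmetric on $\ell_0(\ind)$ and invoke the standard fact that a densely defined symmetric operator is closable because its adjoint extends it. The paper instead works with the factorization $\Grop_\Psi = \An_\Psi \Sy_\Psi$: it notes that $\An_\Psi$ (being an adjoint) is closed and that $\Sy_\Psi$ is closable, then shows by a direct Cauchy-sequence estimate that if $f^n \to f$ and $\Grop_\Psi f^n \to g$ then $\Sy_\Psi f^n$ is Cauchy, whence $\An_\Psi \Sy_\Psi^{**}$ is a closed extension of $\Grop_\Psi$. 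Your symmetry argument is shorter and more self-contained; the paper's argument, while slightly longer, has the minor advantage of exhibiting an explicit closed extension in terms of the analysis and (closed) synthesis operators, which fits the surrounding discussion. It is amusing that the alternative you mention and set aside --- exploiting $\Grop_\Psi = \Sy_\Psi^*\Sy_\Psi$ together with closedness of adjoints --- is essentially the direction the paper pursues, though it handles the bookkeeping you anticipated via the Cauchy estimate $\|\Sy_\Psi f^n - \Sy_\Psi f^m\|_\Hil^2 \leq \|f^n - f^m\|\,\|\Grop_\Psi(f^n - f^m)\|$.
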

\begin{proof}
	$\Grop_\Psi$ is densely defined in $\ell_2(\ind)$ by Lemma \ref{lem:generalsquareintegrability}. Moreover, since $\An_\Psi$ is densely defined on $\Hil_\Psi$, then $\Sy_\Psi$ is closable, and its closure is given by $\Sy_\Psi^{**}$ (see e.g. \cite[Chapter X, \S 1]{Conway90}). To see that also $\Grop_\Psi$ is closable, let $\{f^n\}_{n \in \N} \subset \ell_0(\ind)$ be a sequence converging to $f \in \ell_2(\ind)$ such that $\{\Grop_\Psi f^n\}_{n \in \N}$ converges to $g \in \ell_2(\ind)$. This implies that $\{\Sy_\Psi f^n\}_{n \in \N} \subset \Hil$ is convergent, because
	\begin{align*}
	\|\Sy_\Psi f^n - \Sy_\Psi f^m\|^2_{\Hil} & = \langle f^n - f^m, \Grop_\Psi(f^n - f^m)\rangle_{\ell_2(\ind)}\\
	& \leq \|f^n - f^m\|_{\ell_2(\ind)} \|\Grop_\Psi(f^n - f^m)\|_{\ell_2(\ind)} .
	\end{align*}
	Since $\Sy_\Psi$ is closable, then there exists $\varphi \in \Hil$ such that $\Sy_\Psi^{**} f = \varphi$, while the closedness of $\An_\Psi$ implies that $\An_\Psi \varphi = g$, so the extension of the Gramian defined by $\An_\Psi\Sy_\Psi^{**}$ is closed.
\end{proof}

\noindent
Since $\Grop_\Psi$ is closable, we will always consider its closed extension and denote it with the same symbol.

\

 As we have seen, without any assumption on $\Psi$ the synthesis operator $\Sy_\Psi$ is densely defined in $\ell_2(\ind).$ It is well known (see e.g. \cite{Christensen03}) that for $\Sy_\Psi$ to be a well defined operator from $\ell_2(\ind)$ to $\Hil$ one needs to assume  that
\begin{equation}\label{eq:hp}
\textnormal{the series} \ \displaystyle\sum_{i \in \ind} c_i \psi_i \ \textnormal{converges in} \ \Hil \ \textnormal{for all} \ c \in \ell_2(\ind) .
\end{equation}
In this case, by the uniform boundedness principle, $\Sy_\Psi$ is a bounded operator from $\ell_2(\ind)$ to $\Hil_\Psi$. Its adjoint operator $\An_\Psi : \Hil \to \ell_2(\ind)$ is then also bounded and this is equivalent to say that $\Psi$ satisfies the Bessel condition, that is, there exists a constant $B > 0$ such that
$$ 
\sum_{j \in \ind} |\langle \varphi, \psi_j \rangle_{\Hil} |^2 \leq B \|\varphi\|^2_{\Hil}, \quad \mbox{for all}\ \varphi \in \Hil.
$$
Since assuming $\Psi$ to a Bessel sequence implies that $\Sy_\Psi$ is bounded, one then has that condition (\ref{eq:hp}) is equivalent to the Bessel condition. 

Assuming (\ref{eq:hp}), one can then define the \emph{frame operator} as the bounded positive selfadjoint operator
$$
\begin{array}{rccc}
\Frame_\Psi = \Sy_\Psi \An_\Psi : & \Hil & \rightarrow & \Hil_\Psi\vspace{4pt}\\
& \varphi & \mapsto & \displaystyle\sum_{i \in \ind} \langle\varphi,\psi_i\rangle_\Hil \psi_i
\end{array}
$$
and under this hypothesis one also has that $\Grop_\Psi : \ell_2(\ind) \to \ell_2(\ind)$ is a bounded positive selfadjoint operator.

\section{Riesz and Frame sequences: The Gramian.}\label{GramianFrames}

\subsection{Riesz sequences: The Gramian}\label{sec:RieszGrammian}

When $\Psi= \{\psi_j\}_{j \in \ind} \subset \Hil$ is a Riesz sequence with Riesz bounds $A$ and $B$, the synthesis operator $\Sy_\Psi$ is bounded from $\ell_2(\ind)$ to $\Hil$ with norm not exceeding $\sqrt{B}$, and then so is $\An_\Psi:\Hil_\Psi\to \ell_2(\ind)$.

Since the central term in the definition of Riesz sequence (\ref{eq:Riesz}) reads
$$
\|\sum_{j \in \ind} c_j\psi_j\|^2_{\Hil} = \sum_{j,k \in \ind} c_j \ol{c_k} \langle \psi_j, \psi_k\rangle_{\Hil} = \langle  \Grop_\Psi c , c \rangle_{\ell_2(\ind)} 
$$
it follows that  $\Psi= \{\psi_j\}_{j \in \ind} \subset \Hil$ a Riesz sequence with Riesz bounds $A$ and $B$ if and only if 
\begin{equation}\label{eq:RieszOperatorcondition}
A \Id_{\ell_2(\ind)} \leq \Grop_\Psi \leq B \Id_{\ell_2(\ind)}\,.
\end{equation}
This result can be found in e.g. (\cite[\S 2.3 Lem. 2]{Meyer93} or \cite[Th. 3.6.6]{Christensen03}).

\ 

Recall that a bounded linear operator $T$ in a Hilbert space $\Hil$ is called \textit{positive}, and written $T \geq 0$, if $\langle Tx , x \rangle \geq 0$ for all $x\in \Hil.$ For $T$ and $S$ two bounded linear operators in a Hilbert space $\Hil$ the notation $T\geq S$ means $T - S \geq 0.$ It is well known (see e.g \cite[Th. 12.32] {Rudin73}) that a bounded linear operator $T$ in a Hilbert space $\Hil$ is positive if and only if $T$ is self adjoint and $\sigma(T; \Hil) \subset [0, \infty).$ Here $\sigma(T; \Hil)$ denotes the spectrum of $T$ in $\Hil$. Using this result, it follows from (\ref{eq:RieszOperatorcondition}) that $\Psi= \{\psi_j\}_{j \in \ind} \subset \Hil$ a Riesz sequence with Riesz bounds $A$ and $B$ if and only if
\begin{equation}\label{eq:RieszSpectrum}
\sigma(\Grop_\Psi; \ell_2(\ind)) \subset [A, B].
\end{equation}

\subsection{Frame sequences: The Frame Operator}\label{sec:Frame-1-1}

When $\Psi= \{\psi_j\}_{j \in \ind} \subset \Hil$ is a frame sequence  with frame bounds $A$ and $B$ the right hand side inequality in (\ref{eq:frames}), that is the Bessel condition, implies that the analysis operator $\An_\Psi$ is bounded from $\Hil_\Psi$ into $\ell_2(\ind)$ with norm not exceeding $\sqrt{B},$ and can be extended linearly to $\Hil$, with the same norm, by setting $\An_\Psi (\varphi)= 0$ for $\varphi \in (\Hil_\Psi)^\perp$. Therefore, the frame operator $\Frame_\Psi = \Sy_\Psi \An_\Psi$ is bounded from $\Hil$ into $\Hil_\Psi$
and its norm does not exceed $B$. Since the central term in the definition of frame sequence (\ref{eq:frames}) reads
$$
 \sum_{j \in \ind} |\langle \varphi, \psi_j \rangle_{\Hil} |^2 = \sum_{j \in \ind} \langle \varphi, \psi_j \rangle_{\Hil}\langle \psi_j , \varphi\rangle_{\Hil} = \langle \Frame_\Psi \varphi , \varphi\rangle_{\Hil} \,
$$
it follows that a Bessel sequence $\Psi= \{\psi_j\}_{j \in \ind} \subset \Hil$ is a frame sequence if and only if 
\begin{equation}\label{eq:frameOperatorcondition}
A \Proj_{\Hil_\Psi} \leq \Frame_\Psi \leq B \Proj_{\Hil_\Psi}
\end{equation}
where $\Proj_{\Hil_\Psi}$ is the orthogonal projection of $\Hil$ onto $\Hil_\Psi$.
This result can be found in e.g. \cite[\S 3.2]{Daubechies92}, \cite[\S 8.1]{HW96}.
As argued at the end of Section \ref{sec:RieszGrammian}, \eqref{eq:frameOperatorcondition} is equivalent to
\begin{equation}\label{eq:frameSpectrum}
\sigma(\Frame_\Psi; \Hil_\Psi) \subset [A, B].
\end{equation}

\subsection{Frame sequences: The Gramian}\label{sec:Frame-1-2}

In this subsection we derive a classical characterization of frame sequences  in terms of the Gramian as a consequence of the following basic result.
\begin{prop}\label{lem:duallemma}
Let $\Hil_1$ and $\Hil_2$ be separable Hilbert spaces, let $K : \Hil_1 \to \Hil_2$ be a bounded linear operator and denote with $K^* : \Hil_2 \to \Hil_1$ its adjoint. Let us call $G = |K|^2 = K^*K$, and $F = |K^*|^2 = KK^*$. Then for fixed $0 < A \leq B < +\infty$ the following are equivalent
\begin{itemize}
\item[i.] $A \Proj_{\ol{\Ran(K)}} \leq F \leq B  \Proj_{\ol{\Ran(K)}}$ \  in $\Hil_2.$ 
\item[ii.] $A G \leq G^2 \leq B G$ \ in $\Hil_1$.
\item[iii.] $ \sigma (G; \Hil_1) \subset \{0\}\cup [A , B]$
\item[iv.] $ A \Proj_{\ol{\Ran(K^*)}} \leq G \leq B \Proj_{\ol{\Ran(K^*)}}$ \ in $\Hil_1$.
\end{itemize}
\end{prop}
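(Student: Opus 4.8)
The plan is to treat conditions (ii), (iii) and (iv) as three statements about the single positive selfadjoint operator $G$ on $\Hil_1$, prove their equivalence by the spectral theorem, and then bridge to condition (i) on $\Hil_2$ by conjugating with the partial isometry from the polar decomposition of $K$.

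First I would record the preliminary identifications. Since $\langle K^*Kx,x\rangle = \|Kx\|^2$, one has $\Ker(G)=\Ker(K)$, hence $\ol{\Ran(G)} = (\Ker(K))^\perp = \ol{\Ran(K^*)}$; so $P := \Proj_{\ol{\Ran(K^*)}}$ is exactly the support projection of $G$ (that is, $GP=PG=G$), and symmetrically $Q := \Proj_{\ol{\Ran(K)}}$ is the support projection of $F$. Writing the polar decomposition $K = UG^{1/2}$, with $U$ the partial isometry having initial space $\ol{\Ran(K^*)}$ and final space $\ol{\Ran(K)}$, I obtain $U^*U = P$, $UU^* = Q$, $UP=U$, $QU=U$, and $F = KK^* = UG^{1/2}G^{1/2}U^* = UGU^*$.

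Next, the equivalences among (ii), (iii), (iv) follow from functional calculus on $G$, using that for a real continuous $f$ one has $f(G)\geq 0$ iff $f\geq 0$ on $\sigma(G)\subset[0,\infty)$. For (ii), write $G^2-AG = f(G)$ with $f(\lambda)=\lambda(\lambda-A)$ and $BG-G^2=g(G)$ with $g(\lambda)=\lambda(B-\lambda)$; nonnegativity on $\sigma(G)$ forces every spectral point into $\{0\}\cup[A,\infty)$ and into $[0,B]$ respectively, whose intersection is $\{0\}\cup[A,B]$, i.e. exactly (iii). For (iv) it is cleanest to decompose $\Hil_1 = \Ker(G)\oplus\ol{\Ran(G)}$: on $\Ker(G)$ the operators $G$, $AP$, $BP$ all vanish, while on $\ol{\Ran(G)}$ the projection $P$ acts as the identity, so $AP\leq G\leq BP$ is equivalent to $\sigma\big(G|_{\ol{\Ran(G)}}\big)\subset[A,B]$; since $A>0$ this is in turn equivalent to $\sigma(G)\setminus\{0\}\subset[A,B]$, that is (iii). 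This closes (ii) $\Leftrightarrow$ (iii) $\Leftrightarrow$ (iv).

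Finally, I would deduce (i) $\Leftrightarrow$ (iv) by conjugation with $U$. The map $X\mapsto UXU^*$ is linear and order-preserving (as $X\geq 0$ gives $\langle UXU^*y,y\rangle=\langle XU^*y,U^*y\rangle\geq 0$), and using $UPU^* = UU^*=Q$ and $UGU^*=F$ it sends the chain $AP\leq G\leq BP$ to $AQ\leq F\leq BQ$, giving (iv) $\Rightarrow$ (i); conjugating instead by $U^*$ and using $U^*QU = U^*U = P$ together with $U^*FU = U^*UGU^*U=PGP = G$ reverses the implication. The main obstacle is the correct bookkeeping of the support projections: one must verify carefully that $P$ and $Q$ are genuinely the supports of $G$ and $F$, so that the point $0$ in the spectrum is handled consistently and the identities $UPU^*=Q$ and $U^*FU=G$ hold on the nose. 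This is precisely where the two Hilbert spaces become linked, through the unitary equivalence of $F|_{\ol{\Ran(K)}}$ and $G|_{\ol{\Ran(K^*)}}$ implemented by $U$; once this is in hand, all four conditions collapse to the single spectral statement (iii).
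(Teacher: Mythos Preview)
Your proof is correct. The treatment of (ii) $\Leftrightarrow$ (iii) $\Leftrightarrow$ (iv) by functional calculus is essentially the same as the paper's, including the implicit use of the fact that an isolated point of the spectrum of a selfadjoint operator is an eigenvalue: this is hidden in your sentence ``since $A>0$ this is in turn equivalent to $\sigma(G)\setminus\{0\}\subset[A,B]$'', where the backward implication needs exactly that $0$, if it belonged to $\sigma(G|_{\ol{\Ran(G)}})$, would be isolated and hence an eigenvalue, contradicting injectivity of $G$ on $\ol{\Ran(G)}$. The paper spells this step out explicitly in its $iii.\Rightarrow iv.$ argument, so you may want to add a line there.

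The genuine difference is in the bridge between $\Hil_1$ and $\Hil_2$. The paper links (i) directly to (ii) by the elementary identities $\langle G^2 c,c\rangle_{\Hil_1}=\langle FKc,Kc\rangle_{\Hil_2}$ and $\langle Gc,c\rangle_{\Hil_1}=\langle Kc,Kc\rangle_{\Hil_2}$, so that the inequalities in (ii) are literally the inequalities in (i) tested on vectors of the form $Kc$, which are dense in $\ol{\Ran(K)}$. You instead link (i) to (iv) by polar decomposition $K=UG^{1/2}$, transporting the operator inequalities through the order-preserving conjugation $X\mapsto UXU^*$ and its inverse. Your route is more conceptual --- it makes transparent that $F|_{\ol{\Ran(K)}}$ and $G|_{\ol{\Ran(K^*)}}$ are unitarily equivalent, so all four conditions reduce to the single spectral statement (iii) --- at the cost of invoking the polar decomposition machinery. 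The paper's route is more bare-hands and avoids introducing $U$ altogether, but the link (i) $\Leftrightarrow$ (ii) is perhaps less illuminating as to \emph{why} the two sides match. Both approaches are short and self-contained.
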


\begin{proof}
$i. \Leftrightarrow ii.$ Observe that $i.$ is equivalent to
$$
A \langle \varphi , \varphi \rangle_{\Hil_2} \leq \langle F\varphi , \varphi \rangle_{\Hil_2} \leq B \langle \varphi , \varphi \rangle_{\Hil_2} \quad \mbox{for all} \ \varphi \in \ol{\Ran(K)} \subset \Hil_2\,,
$$	
and that $ii.$ is equivalent to
$$
A \langle Gc , c \rangle_{\Hil_1} \leq \langle G^2 c , c \rangle_{\Hil_1} \leq B \langle Gc , c \rangle_{\Hil_1} \quad \mbox{for all} \ c \in \Hil_1\, .
$$
The desired equivalence follows from	 	
$$
\langle G^2c, c\rangle_{\Hil_1} = \langle K^*KK^*K c, c\rangle_{\Hil_1} = \langle FK c, Kc\rangle_{\Hil_2}\,,
$$ 
and  
$$
\langle Gc, c\rangle_{\Hil_1} = \langle K^*Kc , c \rangle_{\Hil_1} = \langle Kc , Kc \rangle_{\Hil_2}.
$$ 

$ii. \Rightarrow iii.$ The right hand side of $ii.$ means that $G^2 - A G \geq 0$ in $\Hil_1.$ By \cite[Th. 12.32]{Rudin73}, $\sigma(G^2 - A G ; \Hil_1) \subset [0,\infty).$ But $\sigma (G^2 - A G ; \Hil_1) = \sigma(G; \Hil_1)^2 - A \sigma(G; \Hil_1)$ (see e.g. \cite[\S 33, Th. 1]{Halmos51}). Since $G\geq0$, $\sigma (G; \Hil_1) \subset [0,\infty).$ Thus, if $\lambda \in \sigma (G; \Hil_1)$, $\lambda \geq 0$ and $\lambda (\lambda - A)= \lambda^2 - A\lambda \geq 0.$ This implies $\lambda = 0$ or $\lambda \geq A.$ Arguing similarly with the left hand side inequality of $(b)$ one obtains $\lambda = 0$ or $\lambda \leq B.$ This proves the result.

\vspace{.1cm}

$iii. \Rightarrow iv.$ It is enough to prove $\sigma(G;{\ol{\Ran(K^*)}} ) \subset [A , B]$, since this inclusion is equivalent to $A \Id_{\ol{\Ran(K^*)}} \leq G|_{\ol{\Ran(K^*)}} \leq B  \Id_{\ol{\Ran(K^*)}}$ (see, e.g. the argument  of the equivalence between (\ref{eq:RieszOperatorcondition}) and (\ref{eq:RieszSpectrum})), and this is also equivalent to $ A \Proj_{\ol{\Ran(K^*)}} \leq G \leq B \Proj_{\ol{\Ran(K^*)}}$ because $\Ker(G)=\Ker(K)$.

Since $\sigma(G;{\ol{\Ran(K^*)}} ) \subset \sigma (G; \Hil_1),$  condition $iii.$ implies that  \\ \mbox{$\sigma(G;{\ol{\Ran(K^*)}} )\subset \{0\}\cup [A , B]$}. If $\lambda=0$ belongs to $\sigma(G;{\ol{\Ran(K^*)}})$, since $G|_{\ol{\Ran(K^*)}}$ is self-adjoint and $\lambda = 0$ is an isolated point of its spectrum, $\lambda=0$ is an eigenvalue of $G|_{\ol{\Ran(K^*)}}$ (see e.g. \cite[Problem 6.28] {Kubrusly11}). This implies that $\Ker(G|_{\ol{\Ran(K^*)}}) \neq \{0\}$. On the other hand,
$$
\Ker(G|_{\ol{\Ran(K^*)}}) \subset \ol{\Ran(K^*)} \cap \Ker(K) = \Ker(K)^\perp \cap \Ker (K) = \{0\},
$$
and we have a contradiction. Thus, $0\notin \sigma(G;{\ol{\Ran(K^*)}})$ and $iv.$ follows. 
$iv.\Rightarrow ii.$. Since $G$ and $\Proj_{\ol{\Ran(K^*)}}$ are positive operators that commute, hypothesis $ii.$ implies,
$$
A \Proj_{\ol{\Ran(K^*)}} G \leq G^2 \leq B \Proj_{\ol{\Ran(K^*)}} G\,.
$$
But $ \Proj_{\ol{\Ran(K^*)}} G = G$ because $\ol{\Ran (G)} = \ol{\Ran (K^*)}.$
\end{proof}

A direct consequence of the above result is the following well known characterization of frame sequences.

\begin{cor} \label{cor:framegramian}
Let $\Psi = \{\psi_j\}_{j \in \ind} \subset \Hil$ be a collection of elements on a Hilbert space $\Hil$ and define $V_\Psi = \ol{\Ran(\Sy_\Psi^*)} = \Ker(\Sy_\Psi)^\bot$. For fixed $0 < A \leq B < \infty,$ the following statements are equivalent:
\begin{itemize}
	\item[i.] $\Psi$ is a frame sequence with frame bound  $A$ and $B$ .
	\item[ii.] The Gramian $\Grop_\Psi$ is well defined on $\ell_2(\ind)$ and $\sigma( \Grop_\Psi ; \ell_2(\ind)) \subset \{0\}\cup [A , B].$
	\item[iii.] The Gramian $\Grop_\Psi$ is well defined on $\ell_2(\ind)$ and $A \Proj_{V_\Psi} \leq \Grop_\Psi \leq B \Proj_{V_\Psi}$. 
\end{itemize}
\end{cor}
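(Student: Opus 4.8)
The plan is to obtain the corollary as a direct translation of Proposition \ref{lem:duallemma}, applied to the synthesis operator. Concretely, I would take $\Hil_1 = \ell_2(\ind)$, $\Hil_2 = \Hil$ and $K = \Sy_\Psi$, so that $K^* = \An_\Psi$, $G = K^*K = \Grop_\Psi$ is the Gramian and $F = KK^* = \Frame_\Psi$ is the frame operator, while $\ol{\Ran(K)} = \ol{\vsp(\Psi)} = \Hil_\Psi$ and $\ol{\Ran(K^*)} = \ol{\Ran(\Sy_\Psi^*)} = V_\Psi$. The only genuine issue is that Proposition \ref{lem:duallemma} requires $K$ to be a \emph{bounded} operator, whereas a priori $\Sy_\Psi$ is merely densely defined; so the first task is to check that each of the three conditions $i.$--$iii.$ places us inside the bounded framework.

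I would dispose of this boundedness question as follows. If $i.$ holds, then $\Psi$ is in particular a Bessel sequence, which by the discussion in Subsection \ref{sec:basicresults} is equivalent to (\ref{eq:hp}) and guarantees that $\Sy_\Psi$ (and hence $\An_\Psi$) is bounded. If instead $ii.$ or $iii.$ holds, then $\Grop_\Psi$ is by hypothesis defined on all of $\ell_2(\ind)$; using the identity $\|\Sy_\Psi c\|_\Hil^2 = \langle \Grop_\Psi c, c\rangle_{\ell_2(\ind)}$ valid for $c \in \ell_0(\ind)$, I get $\|\Sy_\Psi c\|_\Hil^2 \leq \|\Grop_\Psi\|\, \|c\|_{\ell_2(\ind)}^2$, so $\Sy_\Psi$ is bounded on the dense subspace $\ell_0(\ind)$ and therefore extends to a bounded operator on $\ell_2(\ind)$. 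In all three cases $K = \Sy_\Psi$ is bounded and Proposition \ref{lem:duallemma} applies.

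With boundedness secured, the proof reduces to matching conditions. By (\ref{eq:frameOperatorcondition}), a Bessel sequence $\Psi$ is a frame sequence with bounds $A, B$ if and only if $A \Proj_{\Hil_\Psi} \leq \Frame_\Psi \leq B \Proj_{\Hil_\Psi}$, which under the identifications above is exactly condition $i.$ of Proposition \ref{lem:duallemma}; thus corollary $i.$ corresponds to Proposition $i.$ Likewise, corollary $ii.$ is verbatim condition $iii.$ of Proposition \ref{lem:duallemma} (since $G = \Grop_\Psi$), and corollary $iii.$ is verbatim condition $iv.$ of Proposition \ref{lem:duallemma} (since $\ol{\Ran(K^*)} = V_\Psi$). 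The equivalences $i. \Leftrightarrow iii. \Leftrightarrow iv.$ furnished by Proposition \ref{lem:duallemma} then yield $i. \Leftrightarrow ii. \Leftrightarrow iii.$ for the corollary.

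I expect the only real obstacle to be the boundedness bookkeeping of the first step: one must make sure that the unbounded-Gramian subtleties discussed in Subsection \ref{sec:basicresults} are excluded by each hypothesis, so that the hypotheses of Proposition \ref{lem:duallemma} are genuinely met. Everything after that is a pure dictionary translation, with no further analytic content.
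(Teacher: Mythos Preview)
Your proposal is correct and follows exactly the paper's approach: apply Proposition \ref{lem:duallemma} with $K = \Sy_\Psi$ and match its conditions $i., iii., iv.$ to the corollary's $i., ii., iii.$ via (\ref{eq:frameOperatorcondition}). The paper's proof is a two-line version of what you wrote; your explicit boundedness check (verifying that each hypothesis forces $\Sy_\Psi$ to be bounded so that Proposition \ref{lem:duallemma} genuinely applies) is a point the paper leaves implicit but is worth spelling out.
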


\begin{proof}
	With $K=T_\Psi$, statement $i.$ of Proposition \ref{lem:duallemma} is the frame sequence condition (\ref{eq:frameOperatorcondition}). The equivalences in the Corollary are, precisely, the equivalences of  $i., iii.$ and $iv.$  in Proposition \ref{lem:duallemma}.
\end{proof}

Another proof of the equivalence $i. \Leftrightarrow ii.$ can be found in \cite[Lem. 5.5.4]{Christensen03}.

\vspace{.1cm}

It is now easy to show  that Riesz sequences are frame sequences. Indeed, by (\ref{eq:RieszOperatorcondition}), $\Psi$ is a Riesz sequence if and only if  $\sigma( \Grop_\Psi; \ell_2(\ind)) \subset [A,B]$;  therefore $\sigma( \Grop_\Psi; \ell_2(\ind)) \subset [A,B] \cup \{0\}$ and $ii.  \Rightarrow i.$ of Corollary \ref{cor:framegramian} gives that $\Psi$ is a frame sequence.

\section{Bracket map, the Gramian and cyclic systems} \label{Braket=Gramian}

In this section we will prove the main result of the paper. It says that when $(\Gamma,\Pi,\Hil)$ is a dual integrable triple, the operator Bracket map coincides with the Gramian  on a dense set of $\Hil$. Once this is established, we show how it allows to easily deduce the known characterizations of frame and Riesz bases on cyclic systems in terms of the operator Bracket map.

\subsection{Bracket map and the Gramian}\label{sec:G=B}
Consider a family $\Psi$ that is an orbit $\orb$ of a single vector $\psi \in \Hil$ under a unitary representation $\Pi$ of a discrete countable group $\Gamma$, i.e. $\orb = \{\Pi(\gamma)\psi\}_{\gamma \in \Gamma}$, and denote its linearly generated space by $\psis{\psi} = \ol{\vsp\,\orb}^\Hil$.

The associated Gram matrix reads
$$
\Gram_\orb^{\gamma' , \gamma} = \langle \Pi(\gamma)\psi, \Pi(\gamma')\psi\rangle_\Hil = \g(\gamma^{-1}\gamma') \ , \quad \gamma,\gamma' \in \Gamma
$$
where we have introduced the notation
$$
\g(\gamma) = \langle \psi, \Pi(\gamma)\psi\rangle_\Hil .
$$
The function $\g \in \ell_\infty(\Gamma)$ is the prototype of a function of positive type (see e.g. \cite[\S 3.3]{Folland95}), and $\Gram_\orb$ is the associated positive definite kernel.

If we assume that $\orb$ satisfies condition (\ref{eq:generalsquareintegrability}), the Gramian operator $\Grop_\orb$ is then a densely defined right convolution operator on $\ell_2(\Gamma)$. Indeed, if $f = \{f(\gamma)\}_{\gamma \in \Gamma} \in \ell_0(\Gamma)$
$$
\Grop_\orb f (\gamma') = \sum_{\gamma \in \Gamma} f(\gamma) \langle \psi, \Pi(\gamma^{-1}\gamma')\psi\rangle_\Hil
= f \ast \g (\gamma') .
$$

We can then prove the main theorem of this section.

\begin{theo}\label{prop:G=B}
Let $(\Gamma,\Pi,\Hil)$ be a dual integrable triple.
\begin{itemize}
\item[i.] If $\psi$ is such that $\Grop_\orb$ is a closed and densely defined operator on $\ell_2(\ind)$, then
$$
[\psi,\psi] = \Grop_\orb .
$$
\item[ii.] If $\psi$ is such that $[\psi,\psi] \in L^2(\vn(\Gamma))$, then $\Grop_\orb$ is a closed and densely defined operator on $\ell_2(\ind)$.
\end{itemize}
\end{theo}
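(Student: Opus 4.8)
We have a dual integrable triple $(\Gamma,\Pi,\Hil)$, a fixed $\psi$, its orbit $\orb = \{\Pi(\gamma)\psi\}_{\gamma\in\Gamma}$, and two objects living on $\ell_2(\Gamma)$: the Gramian $\Grop_\orb$, which acts by right convolution $f \mapsto f * \g$ (where $\g(\gamma)=\langle\psi,\Pi(\gamma)\psi\rangle$), and the operator Bracket $[\psi,\psi] \in L^1(\vn(\Gamma))$, which is the element of the noncommutative $L^1$ whose Fourier coefficients are $\wh{[\psi,\psi]}(\gamma)=\langle\psi,\Pi(\gamma)\psi\rangle=\g(\gamma)$. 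So both operators are determined by the same sequence $\g$. The content of the theorem is that, under the respective regularity hypotheses, these two formally-identical objects genuinely coincide as closed densely-defined operators (part i), and that the $L^2$-regularity of the Bracket forces the Gramian to be closed and densely defined (part ii).

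**Let me think about the strategy.** The key observation is that $\Grop_\orb$ is right convolution by $\g$ on $\ell_2(\Gamma)$, which is exactly how elements of $\vn(\Gamma)$ act (recall from the preliminaries that any $F\in\vn(\Gamma)$ acts as $Fu = u*\wh F$). More generally, operators affiliated to $\vn(\Gamma)$—which is what $L^p(\vn(\Gamma))$ elements are—are right convolutions by their Fourier coefficient sequences. Since $[\psi,\psi]$ has Fourier coefficients $\g$, it acts on its domain as $f\mapsto f*\g$, the very same formula as $\Grop_\orb$. So both operators have the same action on the dense subspace $\ell_0(\Gamma)$ of trigonometric polynomials / finitely supported sequences, on which the convolution $f*\g$ is manifestly well-defined (each output coefficient is a finite sum). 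The entire problem therefore reduces to a matter of \emph{closed extensions}: two closed operators that agree on a common core are equal, and the whole game is showing that $\ell_0(\Gamma)$ is a core for the relevant closures.

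**The plan for part i.** I would proceed as follows. First, record that on $\ell_0(\Gamma)$ both $\Grop_\orb$ and (the restriction of) $[\psi,\psi]$ act by $f\mapsto f*\g$, hence agree there. Second, I must check $\ell_0(\Gamma)\subset \mathrm{Dom}([\psi,\psi])$: since $[\psi,\psi]$ is affiliated to $\vn(\Gamma)$ and the monomials $\rr(\gamma)=\delta_\gamma$ are in the domain (they are the defining dense set on which Fourier analysis operates), finite linear combinations $\ell_0(\Gamma)$ lie in the domain and $[\psi,\psi]\,\delta_\gamma = \delta_\gamma * \g$. Third, since $[\psi,\psi]$ is affiliated to $\vn(\Gamma)$, it commutes with the left regular representation $\lr(\Gamma)$, and the span of $\{\lr(\gamma)\delta_\id\}=\ell_0(\Gamma)$ is a dense $\lr$-invariant set; standard affiliation theory then guarantees $\ell_0(\Gamma)$ is a core for $[\psi,\psi]$, so $[\psi,\psi] = \ol{[\psi,\psi]|_{\ell_0(\Gamma)}}$. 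Finally, the hypothesis of part i is precisely that $\Grop_\orb$—which by Corollary \ref{cor:closableGram} is the \emph{closure} of its restriction to $\ell_0(\Gamma)$—is itself closed and densely defined. Since both operators are the closure of the same operator $f\mapsto f*\g$ on the core $\ell_0(\Gamma)$, they coincide.

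**The plan for part ii, and the main obstacle.** For part ii, I assume $[\psi,\psi]\in L^2(\vn(\Gamma))$ and must deduce that $\Grop_\orb$ is closed and densely defined. By the Plancherel identification between $L^2(\vn(\Gamma))$ and $\ell_2(\Gamma)$, the condition $[\psi,\psi]\in L^2$ means $\g = \{\g(\gamma)\}\in\ell_2(\Gamma)$, i.e. $\sum_\gamma|\langle\psi,\Pi(\gamma)\psi\rangle|^2<\infty$. But this is exactly the square-integrability condition \eqref{eq:generalsquareintegrability} for the orbit $\orb$ (all the quantities $t_\gamma$ equal this single sum, by unitarity/homogeneity of the orbit). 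Hence Lemma \ref{lem:generalsquareintegrability} and Corollary \ref{cor:closableGram} apply, giving that $\Grop_\orb$ is closable and densely defined on $\ell_0(\Gamma)$, and we pass to its closure as agreed after Corollary \ref{cor:closableGram}. I expect the main obstacle to be the core argument in part i—rigorously justifying that $\ell_0(\Gamma)$ is a core for the closed operator $[\psi,\psi]$ affiliated to $\vn(\Gamma)$, so that the two closures of $f\mapsto f*\g$ are forced to be literally the same operator (same domain, same action) rather than merely two closed extensions of a common restriction. This is where the affiliation/commutation structure with $\lr(\Gamma)$ must be invoked carefully, and it is the only non-formal step; everything else is bookkeeping with the convolution formula and the already-established lemmas.
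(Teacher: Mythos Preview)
Your argument for part \textit{ii.} is exactly the paper's: Plancherel turns $[\psi,\psi]\in L^2(\vn(\Gamma))$ into $\g\in\ell_2(\Gamma)$, which is condition \eqref{eq:generalsquareintegrability} for the orbit, and Corollary~\ref{cor:closableGram} finishes.

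For part \textit{i.}, however, the paper takes a different and shorter route that entirely bypasses the core argument you flag as the main obstacle. Rather than showing that $[\psi,\psi]$ and $\Grop_\orb$ are both closures of the same restriction to $\ell_0(\Gamma)$, the paper argues directly that $\Grop_\orb\in L^1(\vn(\Gamma))$: it is a right convolution, hence affiliated to $\vn(\Gamma)$, and it is positive with
\[
\tau(\Grop_\orb)=\langle \An_\orb\Sy_\orb\delta_\id,\delta_\id\rangle_{\ell_2(\Gamma)}=\|\Sy_\orb\delta_\id\|_\Hil^2=\|\psi\|_\Hil^2<\infty.
\]
Once $\Grop_\orb$ is known to live in $L^1(\vn(\Gamma))$, the equality $[\psi,\psi]=\Grop_\orb$ follows from the $L^1$ Uniqueness Theorem for Fourier coefficients, since $\tau(\Grop_\orb\rr(\gamma))=\langle\Grop_\orb\delta_\id,\delta_\gamma\rangle=\g(\gamma)=\wh{[\psi,\psi]}(\gamma)$.

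Your approach is sound in principle, but the step ``standard affiliation theory then guarantees $\ell_0(\Gamma)$ is a core for $[\psi,\psi]$'' is doing real work: it amounts to the fact that for a $\tau$-measurable operator $T$ affiliated to a finite von Neumann algebra $\mathcal{M}$ acting on $L^2(\mathcal{M},\tau)$, the set $\mathcal{M}'\xi_\tau$ is a core for $T$. This is true but is more noncommutative-integration machinery than the paper needs. The paper's trace computation buys the same conclusion with a one-line calculation and an appeal to $L^1$ uniqueness, so if you want the cleanest write-up, that is the route to take.
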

\begin{proof}
To prove $i.$, let us first see that $\Grop_\orb \in L^1(\vn(\Gamma))$. As a right convolution operator, it is affiliated with $\vn(\Gamma)$. So, since it is a positive operator, it suffices to check that its trace is finite. This is true because
$$
\tau(\Grop_\orb) = \langle \An_\orb \Sy_\orb \delta_\id, \delta_\id \rangle_{\ell_2(\Gamma)} = \|\Sy_\orb \delta_\id\|^2_{\Hil} = \|\psi\|_\Hil^2 .
$$
In order to see the desired claim, by $L^1(\vn(\Gamma))$ uniqueness of Fourier coefficients (see e.g. \cite[Lem. 2.1]{BHP14}) we need only to prove that
$$
\langle \psi, \Pi(\gamma)\psi\rangle_\Hil = \tau(\Grop_\orb\rr(\gamma)) \quad \forall \, \psi \in \Hil \, , \ \forall \, \gamma \in \Gamma .
$$
Since $\Grop_\orb \delta_\id (\gamma) = \langle \psi, \Pi(\gamma)\psi\rangle_\Hil$, using the traciality of $\tau$ we have indeed
\begin{displaymath}
\tau(\Grop_\orb\rr(\gamma)) = \tau(\rr(\gamma)\Grop_\orb) = \langle \Grop_\orb \delta_\id, \delta_\gamma\rangle_{\ell_2(\Gamma)} = \langle \psi, \Pi(\gamma)\psi\rangle_\Hil .
\end{displaymath}
To prove $ii.$ observe that, by Plancherel Theorem (see e.g. \cite[Lem. 2.2]{BHP14}), $[\psi,\psi] \in L^2(\vn(\Gamma))$ implies that $\sum_{\gamma \in \Gamma}|\tau([\psi,\psi]\rr(\gamma))|^2 < \infty$. By definition of dual integrability, this is equivalent to $\sum_{\gamma \in \Gamma}|\langle \psi, \Pi(\gamma)\psi\rangle_\Hil|^2 < \infty$, which coincides with condition (\ref{eq:generalsquareintegrability}), so the conclusion follows by Corollary \ref{cor:closableGram}.
\end{proof}

\begin{rem}
 Note that the above theorem says exactly that the Bracket map and the Gramian agree on a dense set of $\Hil$. Indeed,  by \cite[Th. 4.1]{BHP14} and Plancherel theorem the set of $\psi \in \Hil$ such that $[\psi,\psi] \in L^2(\vn(\Gamma))$ is a dense set in $\Hil$.
\end{rem}

\subsection{Characterizations of Riesz and frame cyclic systems}
We  show here how to obtain the characterization results of Riesz and frame cyclic systems in terms of the operator Bracket map, by combining the results of Sections \ref{GramianFrames} and \ref{sec:G=B}. This will provide simpler proofs for known results that were previously proven using sophisticated techniques.

\begin{prop}\label{cor:principalcharacterization}
Let $(\Gamma,\Pi,\Hil)$ be a dual integrable triple with associated Bracket map $[\cdot, \cdot]$. 
Then, the orbit $\orb$ is
\begin{itemize}
 \item[i.] a frame sequence with frame bounds $0<A\leq B$ if and only if
\begin{equation}\label{eq:opframes}
A \Proj_{(\Ker[\psi,\psi])^\bot} \leq [\psi,\psi] \leq B \Proj_{(\Ker[\psi,\psi])^\bot} .
\end{equation}
\item[ii.] a Riesz sequence with Riesz bounds $0<A\leq B$ if and only if
\begin{equation*}
A \Id_{\ell_2(\Gamma)}\leq [\psi,\psi] \leq B \Id_{\ell_2(\Gamma)}.
\end{equation*}
\end{itemize}
\end{prop}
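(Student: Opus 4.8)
The plan is to reduce both equivalences to the identification $[\psi,\psi] = \Grop_\orb$ supplied by Theorem \ref{prop:G=B}, and then read off the conclusions from the Gramian characterizations of Section \ref{GramianFrames}. The first thing I would establish is that, in every case under consideration, the hypotheses of Theorem \ref{prop:G=B} are automatically satisfied. On one hand, if $\orb$ is a Riesz (resp. frame) sequence, then the Bessel condition (\ref{eq:hp}) forces $\Sy_\orb$ to be bounded, so that $\Grop_\orb = \An_\orb\Sy_\orb$ is bounded, hence closed and densely defined, and Theorem \ref{prop:G=B}.i applies. On the other hand, each right-hand side inequality bounds $[\psi,\psi]$ above by a bounded operator ($B\Id$ or $B\Proj_{(\Ker[\psi,\psi])^\bot}$); since $[\psi,\psi]$ is a priori only an element of $L^1(\vn(\Gamma))$, this boundedness places it in $\vn(\Gamma) = L^\infty(\vn(\Gamma)) \subset L^2(\vn(\Gamma))$ (the inclusion holding because the trace is finite), so that Theorem \ref{prop:G=B}.ii applies. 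Either way one obtains $[\psi,\psi] = \Grop_\orb$ as operators on $\ell_2(\Gamma)$.

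With this identification in hand, part ii. is immediate: substituting $[\psi,\psi] = \Grop_\orb$ turns the claimed inequality $A\Id \leq [\psi,\psi] \leq B\Id$ into the operator condition (\ref{eq:RieszOperatorcondition}), which is exactly equivalent to $\orb$ being a Riesz sequence with bounds $A,B$. This settles both implications simultaneously.

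For part i. I would similarly substitute and invoke Corollary \ref{cor:framegramian}, but here one must first match the projections appearing on the two sides. Writing $V_\orb = \Ker(\Sy_\orb)^\bot$ as in that corollary, and using $[\psi,\psi] = \Grop_\orb = \Sy_\orb^*\Sy_\orb$ together with the elementary identity $\Ker(\Sy_\orb^*\Sy_\orb) = \Ker(\Sy_\orb)$, I get $\Ker[\psi,\psi] = \Ker(\Sy_\orb)$ and hence $\Proj_{(\Ker[\psi,\psi])^\bot} = \Proj_{V_\orb}$. After this identification, (\ref{eq:opframes}) is literally statement iii. of Corollary \ref{cor:framegramian}, and the frame equivalence follows.

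The step I expect to be the main obstacle is the preliminary bootstrapping: verifying that $[\psi,\psi]$, which dual integrability only guarantees to lie in $L^1(\vn(\Gamma))$ and which may therefore be unbounded, is bounded precisely when one side of each equivalence holds, and that this boundedness upgrades its membership to $L^2(\vn(\Gamma))$ so that Theorem \ref{prop:G=B} can be triggered from both directions. Once this interplay between the analytic Riesz/frame conditions and the operator-theoretic notions of boundedness, closedness and domain is settled, the remainder is a direct transcription of the Gramian results of Section \ref{GramianFrames}.
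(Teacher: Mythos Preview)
Your proposal is correct and follows essentially the same route as the paper's own proof: both argue that under either hypothesis $[\psi,\psi]$ (equivalently $\Grop_\orb$) is bounded and hence lies in $\vn(\Gamma)\subset L^2(\vn(\Gamma))$, invoke Theorem \ref{prop:G=B} to identify $[\psi,\psi]=\Grop_\orb$, use $\Ker(\Grop_\orb)=\Ker(\Sy_\orb)$ to match the projections, and then read off the conclusions from (\ref{eq:RieszOperatorcondition}) and Corollary \ref{cor:framegramian}. Your write-up is in fact slightly more explicit than the paper's about the two-directional bootstrapping and the chaining of parts ii and i of Theorem \ref{prop:G=B}.
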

\begin{proof}
To prove {\it i.} let us first assume that $\orb$ is a frame sequence. Then $\Grop_\orb$ is bounded, and 
by Corollary \ref{cor:framegramian} we have that $A \Proj_{(\Ker(\Grop_\orb))^\bot} \leq \Grop_\orb \leq B \Proj_{(\Ker(\Grop_\orb))^\bot}$, where we have used that $\Ker(\Grop_\orb)=\Ker (T_\orb)$. Furthermore, since in particular $\Grop_\orb$  belongs to $\vn(\Gamma) \subset L^2(\vn(\Gamma))$, by Theorem \ref{prop:G=B} we know that $[\psi ,\psi]=\Grop_\orb$, and then \eqref{eq:opframes} follows.

Reciprocally, if \eqref{eq:opframes} holds, then $[\psi,\psi]\in \vn(\Gamma) \subset L^2(\vn(\Gamma))$ and by Theorem \ref{prop:G=B} $[\psi ,\psi]=\Grop_\orb$. Thus, Corollary \ref{cor:framegramian} implies that $\orb$ is a frame sequence.
 To see {\it ii.} simply observe that Riesz sequences is equivalent to \eqref{eq:RieszOperatorcondition} and that  $[\psi ,\psi]=\Grop_\orb$ by Theorem \ref{prop:G=B}.
\end{proof}

\begin{rem}
 Observe that the above result recovers those  of \cite[Th. A]{BHP14}. 
\end{rem}

\section{Abelian groups}\label{abelian}

In this section we show the relationship between the abelian Bracket map and the operator Bracket map. More precisely, we show that for abelian groups the Bracket map (\ref{eq:Bracketabelian}) is the Fourier multiplier of the operator Bracket of Definition \ref{noncommutativeBracket}. As a consequence, it is possible to obtain an explicit proof that in abelian settings the condition on the Brackets in Proposition \ref{cor:principalcharacterization} is equivalent to that of Proposition \ref{prop:conmutative}.

Let $\Gamma$ be a discrete and countable abelian group, let us denote with $\wh{\Gamma}$ its dual group of characters, and let $\F_\Gamma : \ell_2(\Gamma) \to L^2(\wh{\Gamma})$ be the Fourier transform
$$
\F_\Gamma u = \sum_{\gamma \in \Gamma} u(\gamma)\overline{X_\gamma} \ , \quad u \in \ell_2(\Gamma),
$$
where $X_\gamma:\wh \Gamma\to \C$ are the continuous characteres of $\wh\Gamma$, that are $X_\gamma(\alpha)=\alpha(\gamma)$ for $\alpha\in \wh\Gamma$.
Let $S(\Gamma) = \vsp\{ \rr(\gamma)\}_{\gamma \in \Gamma} \subset \vn(\Gamma)$ and let $P(\widehat \Gamma) = \vsp\{X_\gamma\}_{\gamma \in \Gamma} \subset L^\infty(\wh{\Gamma})$ denote respectively the sets of noncommutative and commutative trigonometric polynomials. Define then the map $\Lambda : S(\Gamma) \to P(\wh{\Gamma})$ as
\begin{equation}\label{eq:multiplier}
\Lambda : F = \sum_{\gamma \in \Delta_F} \widehat F(\gamma) \rr(\gamma)^* \ \mapsto \ 
\Lambda (F) = \sum_{\gamma \in \Delta_F} \widehat F(\gamma)\, \overline{X_\gamma}
\end{equation}
where  $\Delta_F \subset \Gamma$ is a finite set. This map is the usual multiplier map, that turns a convolution operator into the multiplier by the Fourier transform of the convolution kernel, i.e. it satisfies
\begin{equation}\label{eq:Fourierintertwining}
\F_\Gamma F u  = \Lambda(F)\F_\Gamma u, \quad F \in S(\Gamma), \quad u \in  \ell_2(\Gamma).
\end{equation}
As such, it extends by density to a bounded map from $\vn(\Gamma)$ to $L^\infty(\wh{\Gamma})$, also denoted by $\Lambda$, which is an isometry and obviously  preserves equation \eqref{eq:Fourierintertwining} on $\vn(\Gamma)$. Indeed, if $F \in \vn(\Gamma)$ then it is a bounded operator on $\ell_2(\Gamma)$ and, by definition, $\|F\|_{L^\infty(\vn(\Gamma))} = \|F\|_{op}$, where $\|\cdot\|_{op}$ denotes the operator norm. So,  we have
$$
\|F\|_{L^\infty(\vn(\Gamma))} = \|\F_\Gamma F \F_\Gamma^{-1}\|_{op} = \textnormal{ess}\sup_{\alpha \in \wh{\Gamma}} |\Lambda(F)(\alpha)| = \|\Lambda(F)\|_{L^\infty(\wh{\Gamma})},
$$
where we have used the known fact that the operator norm of a multiplier operator is the $L^{\infty}$-norm of the multiplication kernel, which in our case is $\Lambda(F)$.

For the sake of completeness we recall the following well-known result.
\begin{prop}
The map $\Lambda : \vn(\Gamma) \to L^\infty(\wh{\Gamma})$ defined by density as in (\ref{eq:multiplier}) is an isometric $*$-homomorphism satisfying
\begin{equation}\label{eq:LambdaFouriercoefficients}
\int_{\widehat \Gamma} \Lambda (F)(\alpha) \alpha(\gamma) d\alpha = \wh{F}(\gamma) \ , \quad \forall \ \gamma \in \Gamma .
\end{equation}
\end{prop}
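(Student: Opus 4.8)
The plan is to exploit the intertwining relation \eqref{eq:Fourierintertwining}, which the preceding discussion extends to all of $\vn(\Gamma)$ and which says precisely that $\F_\Gamma F \F_\Gamma^{-1} = M_{\Lambda(F)}$, where $M_g$ denotes multiplication by $g \in L^\infty(\wh{\Gamma})$ acting on $L^2(\wh{\Gamma})$. The decisive structural fact is that $\F_\Gamma$ is \emph{unitary}: by orthogonality of characters (Plancherel on the compact abelian group $\wh{\Gamma}$), $\{\overline{X_\gamma}\}_{\gamma\in\Gamma}$ is an orthonormal basis of $L^2(\wh{\Gamma})$, and $\F_\Gamma$ carries the orthonormal basis $\{\delta_\gamma\}_{\gamma\in\Gamma}$ of $\ell_2(\Gamma)$ onto it. Hence $F \mapsto \F_\Gamma F \F_\Gamma^{-1}$ is a $*$-isomorphism of $\vn(\Gamma)$ onto the algebra of multiplication operators, and the entire algebraic content of $\Lambda$ is read off from this conjugation. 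The isometry claim is already the displayed computation immediately preceding the statement, so it remains to verify the $*$-homomorphism property and \eqref{eq:LambdaFouriercoefficients}.

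For multiplicativity I would compute $M_{\Lambda(FG)} = \F_\Gamma FG \F_\Gamma^{-1} = (\F_\Gamma F \F_\Gamma^{-1})(\F_\Gamma G \F_\Gamma^{-1}) = M_{\Lambda(F)}M_{\Lambda(G)} = M_{\Lambda(F)\Lambda(G)}$; since $\Lambda$ is an isometry, in particular injective, the symbols must coincide, giving $\Lambda(FG) = \Lambda(F)\Lambda(G)$. For the $*$-relation I would use unitarity of $\F_\Gamma$ together with $M_g^* = M_{\overline{g}}$, so that $M_{\Lambda(F^*)} = \F_\Gamma F^* \F_\Gamma^{-1} = (\F_\Gamma F \F_\Gamma^{-1})^* = M_{\Lambda(F)}^* = M_{\overline{\Lambda(F)}}$, whence $\Lambda(F^*) = \overline{\Lambda(F)}$. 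Linearity is immediate from the defining formula \eqref{eq:multiplier}.

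For \eqref{eq:LambdaFouriercoefficients} the key preliminary identity is $\tau(F) = \int_{\wh{\Gamma}}\Lambda(F)(\alpha)\,d\alpha$, obtained by transporting the trace through $\F_\Gamma$: since $\F_\Gamma\delta_\id = \overline{X_\id}$ is the constant function $1$, one gets $\tau(F) = \langle F\delta_\id, \delta_\id\rangle_{\ell_2(\Gamma)} = \langle M_{\Lambda(F)}1, 1\rangle_{L^2(\wh{\Gamma})} = \int_{\wh{\Gamma}}\Lambda(F)\,d\alpha$. I would also record $\Lambda(\rr(\gamma)) = X_\gamma$, which follows directly from \eqref{eq:multiplier}: the Fourier coefficients satisfy $\wh{\rr(\gamma)}(\gamma') = \tau(\rr(\gamma)\rr(\gamma')) = \tau(\rr(\gamma\gamma'))$, which vanishes unless $\gamma'=\gamma^{-1}$, so $\Lambda(\rr(\gamma)) = \overline{X_{\gamma^{-1}}} = X_\gamma$ (using $\overline{\alpha(\gamma^{-1})} = \alpha(\gamma)$). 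Combining these with multiplicativity yields
\[
\wh{F}(\gamma) = \tau(F\rr(\gamma)) = \int_{\wh{\Gamma}}\Lambda(F\rr(\gamma))\,d\alpha = \int_{\wh{\Gamma}}\Lambda(F)(\alpha)\,X_\gamma(\alpha)\,d\alpha = \int_{\wh{\Gamma}}\Lambda(F)(\alpha)\,\alpha(\gamma)\,d\alpha,
\]
which is exactly \eqref{eq:LambdaFouriercoefficients}.

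The one genuine subtlety — the step I would be most careful about — is the passage from trigonometric polynomials to all of $\vn(\Gamma)$. Because $P(\wh{\Gamma})$ is not norm-dense in $L^\infty(\wh{\Gamma})$, the extension of $\Lambda$ is not a plain operator-norm limit; it is cleanest to regard $\Lambda$ as the symbol map of the $*$-isomorphism $F\mapsto \F_\Gamma F\F_\Gamma^{-1}$ of the maximal abelian algebra $\vn(\Gamma)$ onto $L^\infty(\wh{\Gamma})$, for which \eqref{eq:Fourierintertwining} holds on all of $\vn(\Gamma)$ as asserted above. Granting this, each of the steps is a routine manipulation of multiplication operators and of the normalized trace.
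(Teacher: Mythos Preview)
Your argument is correct and rests on the same core device as the paper's proof, namely the intertwining relation \eqref{eq:Fourierintertwining} read as $\F_\Gamma F \F_\Gamma^{-1} = M_{\Lambda(F)}$; the $*$-homomorphism claims are handled identically in spirit (the paper simply asserts they ``follow immediately by the definition'', while you spell out the conjugation argument and are more careful about the density subtlety). The only real variation is in the derivation of \eqref{eq:LambdaFouriercoefficients}: the paper identifies convolution kernels directly, writing $Fu = \wh{F}\ast u = \F_\Gamma^{-1}\Lambda(F)\ast u$ and setting $u = \delta_\id$, whereas you first establish the trace formula $\tau(F) = \int_{\wh{\Gamma}}\Lambda(F)$ and then apply it to $F\rr(\gamma)$ via multiplicativity and $\Lambda(\rr(\gamma)) = X_\gamma$ --- both routes are short and equivalent.
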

\begin{proof}
It is easy to see that $\Lambda$ is a linear bijective map, and we have already seen that $\|F\|_{L^\infty(\vn(\Gamma))} = \|\Lambda(F)\|_{L^\infty(\wh{\Gamma})}$. Moreover, it follows immediately by the definition that $\Lambda$ is an algebra homomorphism, i.e.
$$
\Lambda (F_1 F_2) = \Lambda (F_1) \Lambda (F_2) \ , \quad F_1, F_2 \in \vn(\Gamma) ,
$$
and that $\Lambda$ preserves the natural involutions given by operator adjoint in $\vn(\Gamma)$ and complex conjugation in $L^\infty(\wh{\Gamma})$, i.e.
$$
\Lambda (F^*) = \overline{\Lambda (F)}.
$$
Finally, identities (\ref{eq:LambdaFouriercoefficients}) can be obtained by (\ref{eq:Fourierintertwining}) since, for all $u \in \ell_2(\Gamma)$
$$
(\wh{F}\ast u)(\gamma) = Fu(\gamma) = \F_\Gamma^{-1}(\Lambda(F)\F_\Gamma u)(\gamma) = (\F_\Gamma^{-1}\Lambda(F)\ast u)(\gamma)
$$
so (\ref{eq:LambdaFouriercoefficients}) results by choosing $u = \delta_\id$.
\end{proof}

This result can actually be extended to all noncommutative $L^p$ spaces over $\vn(\Gamma)$, as follows.
\begin{theo}\label{theo:Lpmult}
Let $1 \leq p \leq \infty$. The multiplier map $\Lambda$ extends to an isometric isomorphism from $L^p(\vn(\Gamma))$ onto $L^p(\wh{\Gamma})$ satisfying (\ref{eq:LambdaFouriercoefficients}) and such that
\begin{equation}\label{eq:Holderhomomorphism}
\Lambda(F_1 F_2)  = \Lambda (F_1)\Lambda (F_2),
\end{equation}
for all $F_1\in L^p(\vn(\Gamma))$, $F_2\in L^q(\vn(\Gamma))$, with $1 \leq p , q \leq \infty$  such that $\frac1p + \frac1q = 1$.
\end{theo}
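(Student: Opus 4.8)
The plan is to bootstrap from the case $p=\infty$, already settled in the preceding Proposition, where $\Lambda:\vn(\Gamma)\to L^\infty(\wh\Gamma)$ is an isometric $*$-isomorphism. For finite $p$ I would first check that $\Lambda$ carries the $\|\cdot\|_p$ norm to the $L^p(\wh\Gamma)$ norm on the trigonometric polynomials $S(\Gamma)$, and then extend by density: $S(\Gamma)$ is dense in $L^p(\vn(\Gamma))$ by the very definition of the latter, while $P(\wh\Gamma)=\Lambda(S(\Gamma))$ is dense in $L^p(\wh\Gamma)$ for $p<\infty$ by Stone--Weierstrass (the characters separate points, contain the constants and are closed under conjugation, hence are uniformly dense in $C(\wh\Gamma)$ and therefore dense in $L^p(\wh\Gamma)$). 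An isometry with dense domain and dense range extends uniquely to an isometric isomorphism between the completions, which gives surjectivity for free.

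The heart of the matter is the norm identity on $S(\Gamma)$. The key observation is that evaluating \eqref{eq:LambdaFouriercoefficients} at $\gamma=\id$ gives
$$\int_{\wh\Gamma}\Lambda(G)(\alpha)\,d\alpha=\wh G(\id)=\tau(G)\qquad\text{for all } G\in\vn(\Gamma),$$
so the trace corresponds to integration against Haar measure. Next, since $\Lambda$ is an isometric $*$-isomorphism of $C^*$-algebras it intertwines the continuous functional calculus of normal elements; applying this to the positive operator $F^*F$ and the continuous function $t\mapsto t^{p/2}$ yields $\Lambda(|F|^p)=|\Lambda(F)|^p$ for every $F\in S(\Gamma)$ and every $1\le p<\infty$. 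Combining the two facts,
$$\|F\|_p^p=\tau(|F|^p)=\int_{\wh\Gamma}\Lambda(|F|^p)(\alpha)\,d\alpha=\int_{\wh\Gamma}|\Lambda(F)(\alpha)|^p\,d\alpha=\|\Lambda(F)\|_{L^p(\wh\Gamma)}^p,$$
which is exactly the desired isometry. Here $|F|^p\in\vn(\Gamma)$ because $F$ is a bounded operator, so the trace-integral identity above applies to $G=|F|^p$.

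It then remains to propagate the two stated properties to the extended map. Identity \eqref{eq:LambdaFouriercoefficients} holds on $S(\Gamma)$ and, since $L^p(\vn(\Gamma))\subset L^1(\vn(\Gamma))$ and Fourier coefficients depend continuously on $\|\cdot\|_1$ (as $|\wh F(\gamma)|\le\|F\|_1$), it passes to the limit. For \eqref{eq:Holderhomomorphism}, take $F_1\in L^p(\vn(\Gamma))$ and $F_2\in L^q(\vn(\Gamma))$ with $\frac1p+\frac1q=1$, and approximate them in $\|\cdot\|_p$ and $\|\cdot\|_q$ by $P_n,Q_n\in S(\Gamma)$. Hölder's inequality gives $P_nQ_n\to F_1F_2$ in $\|\cdot\|_1$, while on the commutative side $\Lambda(P_n)\Lambda(Q_n)\to\Lambda(F_1)\Lambda(F_2)$ in $L^1(\wh\Gamma)$. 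Since $\Lambda$ is an algebra homomorphism on $S(\Gamma)$ we have $\Lambda(P_nQ_n)=\Lambda(P_n)\Lambda(Q_n)$, and passing to the limit using the $L^1$ isometry of $\Lambda$ yields \eqref{eq:Holderhomomorphism}.

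The step I expect to require the most care is the isometry on $S(\Gamma)$, and specifically the identity $\Lambda(|F|^p)=|\Lambda(F)|^p$: this is what reduces the noncommutative $p$-norm to a genuine integral of a $p$-th power, and it is where the $C^*$-algebraic fact that a $*$-isomorphism preserves functional calculus does the real work. Everything downstream is a density argument, the only mild technical point being the Hölder-continuity of multiplication $L^p\times L^q\to L^1$ in both the noncommutative and commutative settings, which the excerpt has already recorded as available.
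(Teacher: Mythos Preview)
Your argument is correct, but it proceeds differently from the paper. The paper first establishes the $L^1$ isometry on $S(\Gamma)$ by an explicit Weierstrass approximation of $|z|$ by polynomials $Q_\epsilon$, using the homomorphism property to compute $\int_{\wh\Gamma}Q_\epsilon(\Lambda(F))\,d\alpha=\tau(Q_\epsilon(F))$ and then letting $\epsilon\to 0$; the remaining $1<p<\infty$ are obtained by interpolation between the $L^1$ and $L^\infty$ endpoints. You instead observe in one stroke that a $*$-isomorphism of $C^*$-algebras preserves continuous functional calculus, so $\Lambda(|F|^p)=|\Lambda(F)|^p$ directly, and combine this with the trace--integral identity $\tau(G)=\int_{\wh\Gamma}\Lambda(G)\,d\alpha$ to get the $L^p$ isometry for all finite $p$ at once. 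Your route is shorter and more conceptual, and avoids both the polynomial approximation and the interpolation step; the paper's route is more elementary in that it never appeals to functional calculus as a black box, reducing everything to algebra with polynomials. For the multiplicativity \eqref{eq:Holderhomomorphism} both arguments are the same density--H\"older approximation; note only that when one exponent is $\infty$ you cannot approximate that factor in operator norm by trigonometric polynomials, but you may keep it fixed and approximate only the $L^1$ factor, invoking the already known homomorphism on $\vn(\Gamma)$ (the paper's proof has the same minor wrinkle).
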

\begin{proof}
Let us first prove that $\Lambda$ maps $L^1(\vn(\Gamma))$ isometrically into $L^1(\wh{\Gamma})$, i.e.
$$
\|F\|_{L^1(\vn(\Gamma))} = \|\Lambda(F)\|_{L^1(\wh{\Gamma})} .
$$
It suffices to prove it in $S(\Gamma)$, and extend the result by density, which also immediately provides surjectivity. For $F\in S(\Gamma)$, let $R_F = \|\Lambda(F)\|_{L^\infty(\widehat \Gamma)} < \infty$. Since $|z|$ is a continuous function in $\C$, by Weierstrass Approximation Theorem, given $\epsilon > 0$ there exist a polynomial $\displaystyle Q_\epsilon (z) = \sum_{k=1}^N b_k z^k$ such that $||z| - Q_\epsilon (z)| \leq \epsilon$ on the ball of radius $R_F$. Using that $|\Lambda (F) (\alpha)| \leq R_F$ for all $\alpha \in \widehat \Gamma$ and that $d\alpha$ is the normalized Haar measure on the compact measure space $\widehat \Gamma$ we deduce:
$$
\Big| \int_{\widehat \Gamma} |\Lambda(F)(\alpha)| d\alpha - \int_{\widehat \Gamma} Q_\epsilon (\Lambda (F)(\alpha)) d\alpha \Big| \leq \epsilon \int_{\widehat \Gamma} d\alpha = \epsilon\,.
$$
By (\ref{eq:LambdaFouriercoefficients}) with $\gamma = \id$ and the homomorphism property of $\Lambda$ we obtain
$$ \int_{\widehat \Gamma} Q_\epsilon (\Lambda (F)(\alpha)) d\alpha = \sum_{k=1}^N b_k \tau(F^k) = \tau (Q_\epsilon (F))\,.$$
Since the operator $Q_\epsilon (F) - |F|$ is an operator with spectral radius less than or equal $\epsilon$ we obtain
$$ | \tau (Q_\epsilon (F)) - \tau(|F|) | \leq \tau (|Q_\epsilon (F) - |F||) \leq \epsilon \tau(I) = \epsilon\,.$$
Then, since $\epsilon$ is arbitrary we deduce,
$$ \| \Lambda (F) \|_{L^1 (\widehat \Gamma)} = \tau(|F|) = \| F\|_1\,.$$
Now, using that  we already proved the isometry property of $\Lambda$  for $p = \infty$, the result for all $1 < p < \infty$ follows  by interpolation.

What is left is  to prove (\ref{eq:Holderhomomorphism}). For this, observe first that $F_1 F_2 \in L^1(\vn(\Gamma))$ by H\"{o}lder's inequality. Thus, it is enough to show that $\Lambda(F_1 F_2)$ and $\Lambda (F_1) \Lambda (F_2)$ coincide as functions in $L^1(\widehat \Gamma).$ Given $\epsilon >0$, choose $P_1, P_2 \in S(\Gamma)$ such that
$$
\| F_1 - P_1 \|_p \leq \epsilon, \qquad \mbox{and} \qquad \| F_2 - P_2 \|_q \leq \epsilon\,.
$$
By the linearity of $\Lambda$ and the homomorphism property,
$$
\Lambda (F_1 F_2) = \Lambda ((F_1 - P_1) F_2) + \Lambda (P_1(F_2 - P_2)) + \Lambda(P_1) \Lambda(P_2).
$$
We also have,
$$
\Lambda(F_1) \Lambda (F_2) = (\Lambda(F_1) - \Lambda(P_1)) \Lambda (F_2) + \Lambda (P_1) (\Lambda(F_2) - \Lambda(P_2)) + \Lambda(P_1) \Lambda(P_2).
$$
Therefore,
\begin{align} \label{lem:LCA2-2}
\| \Lambda (F_1 F_2) &-  \Lambda(F_1) \Lambda (F_2)\|_{L^1(\widehat \Gamma)} \leq
\| \Lambda ((F_1 - P_1) F_2)\|_{L^1(\widehat \Gamma)} + \|\Lambda (P_1(F_2 - P_2))\|_{L^1(\widehat \Gamma)}  \nonumber \\
& +\|(\Lambda(F_1) - \Lambda(P_1)) \Lambda (F_2)\|_{L^1(\widehat \Gamma)} +
\|  \Lambda (P_1) (\Lambda(F_2) - \Lambda(P_2))   \|_{L^1(\widehat \Gamma)}.
\end{align}
To bound the right hand side of \eqref{lem:LCA2-2} apply the isometry property of $\Lambda$ and 
H\"{o}lder's inequality to obtain 
$$
\|\Lambda ((F_1 - P_1) F_2)\|_{L^1(\widehat \Gamma)} \leq \epsilon \|F_2\|_q,
$$
$$
\|\Lambda (P_1(F_2 - P_2))\|_{L^1(\widehat \Gamma)} \leq \epsilon \|P_1\|_p \leq \epsilon (\|F_1\|_p + \epsilon),
$$
$$
\|(\Lambda(F_1) - \Lambda(P_1)) \Lambda (F_2)\|_{L^1(\widehat \Gamma)} \leq \epsilon \|F_2\|_q,
$$
and
$$
\|  \Lambda (P_1) (\Lambda(F_2) - \Lambda(P_2))   \|_{L^1(\widehat \Gamma)} \leq \epsilon \|P_1\|_p \leq \epsilon (\|F_1\|_p + \epsilon). $$
The result follows from these inequalities since $\epsilon$ is arbitrary.
\end{proof}

Recall that by \cite[Corollary 3.4]{HSWW10} and \cite[Theorem 4.1]{BHP14} we have that, for abelian groups, the two notions of dual integrability coincide since they are both equivalent to the square integrability of the representation. However, for a discrete abelian group we have two definitions of a Bracket map of different nature: the one given in \cite{BHP14} (see Definition \ref{noncommutativeBracket}), which is operator valued, that we will denote by $[ \ , \ ]^{op}$ in this subsection, and the one defined in \cite{HSWW10} (see (\ref{eq:Bracketabelian})), whose values are functions, and denoted by $[ \ , \ ]$ in this subsection. Then, we now prove the main result of this section which establishes the relationship between  $[ \ , \ ]^{op}$ and $[ \ , \ ]$.

\begin{theo}
Let $(\Gamma, \Pi, \Hil)$ be a dual integrable triple and suppose that $\Gamma$ is abelian.
If $\psi_1, \psi_2 \in \Hil$, then
\begin{equation} \label{Brackets}
\Lambda([\psi_1, \psi_2]^{op}) = [\psi_1, \psi_2]
\end{equation}
as elements of $L^1(\wh{\Gamma}).$
\end{theo}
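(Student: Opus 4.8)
The plan is to establish the identity \eqref{Brackets} by comparing Fourier coefficients, exploiting that both sides are elements of $L^1(\wh{\Gamma})$ and that an $L^1$ function on a compact abelian group is uniquely determined by its Fourier coefficients.

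First I would check that the two sides of \eqref{Brackets} are a priori objects of the same type. By Definition \ref{noncommutativeBracket}, $[\psi_1,\psi_2]^{op}$ belongs to $L^1(\vn(\Gamma))$, so that Theorem \ref{theo:Lpmult} applies and $\Lambda([\psi_1,\psi_2]^{op})$ is a well-defined element of $L^1(\wh{\Gamma})$; likewise $[\psi_1,\psi_2] \in L^1(\wh{\Gamma})$ by \eqref{eq:Bracketabelian}. It therefore suffices to verify that these two functions have the same Fourier coefficients.

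Next I would compute the $\gamma$-th Fourier coefficient of each side. For the left-hand side I would invoke identity \eqref{eq:LambdaFouriercoefficients} of Theorem \ref{theo:Lpmult}, which is valid on all of $L^1(\vn(\Gamma))$, together with the definition \eqref{eq:Fouriercoefficients} of the Fourier coefficients, to obtain
$$
\int_{\wh{\Gamma}} \Lambda([\psi_1,\psi_2]^{op})(\alpha)\,\alpha(\gamma)\,d\alpha = \tau\big([\psi_1,\psi_2]^{op}\rr(\gamma)\big),
$$
and then use Definition \ref{noncommutativeBracket} to identify the right side with $\langle \psi_1, \Pi(\gamma)\psi_2\rangle_\Hil$. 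For the right-hand side, the defining relation \eqref{eq:Bracketabelian} gives directly
$$
\int_{\wh{\Gamma}} [\psi_1,\psi_2](\alpha)\,\alpha(\gamma)\,d\alpha = \langle \psi_1, \Pi(\gamma)\psi_2\rangle_\Hil .
$$
Hence, for every $\gamma \in \Gamma$, both functions have the same $\gamma$-th Fourier coefficient $\langle \psi_1, \Pi(\gamma)\psi_2\rangle_\Hil$.

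Finally, since $\wh{\Gamma}$ is a compact abelian group, the classical $L^1(\wh{\Gamma})$ uniqueness theorem guarantees that an $L^1$ function is determined by its Fourier coefficients, so the two sides of \eqref{Brackets} coincide as elements of $L^1(\wh{\Gamma})$. I do not expect any serious obstacle here: once Theorem \ref{theo:Lpmult} is in hand, the argument is essentially bookkeeping. The only point deserving a little care is that the Fourier-coefficient identity \eqref{eq:LambdaFouriercoefficients}, first proved on $\vn(\Gamma)$, is indeed applied to the genuinely $L^1$ element $[\psi_1,\psi_2]^{op}$, which is exactly what the extension asserted in Theorem \ref{theo:Lpmult} provides.
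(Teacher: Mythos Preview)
Your argument is correct and matches the paper's proof essentially line for line: both compute the Fourier coefficients of $\Lambda([\psi_1,\psi_2]^{op})$ via Theorem~\ref{theo:Lpmult} and identity~\eqref{eq:LambdaFouriercoefficients}, identify them with $\langle \psi_1,\Pi(\gamma)\psi_2\rangle_\Hil$ through Definition~\ref{noncommutativeBracket}, and conclude by $L^1(\wh{\Gamma})$ uniqueness against the defining relation~\eqref{eq:Bracketabelian}.
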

\begin{proof}
Observe first that, by definition of operator Bracket map, we have
$$
\wh{[\psi_1,\psi_2]^{op}}(\gamma) = \langle \psi_1, \Pi(\gamma)\psi_2\rangle_\Hil.
$$
Now, by Theorem \ref{theo:Lpmult}, we can apply the identity (\ref{eq:LambdaFouriercoefficients}) to obtain
$$
\int_{\widehat \Gamma} \Lambda([\psi_1, \psi_2]^{op})(\alpha) \alpha(\gamma)\, d\alpha = \wh{[\psi_1,\psi_2]^{op}}(\gamma) = \langle \psi_1, \Pi(\gamma)\psi_2\rangle_\Hil .
$$
Again by Theorem \ref{theo:Lpmult}, $\Lambda([\psi_1, \psi_2]^{op}) \in L^1(\wh{\Gamma})$. Then, the desired claim follows by uniqueness of Fourier coefficients (see \eqref{eq:Bracketabelian}).
\end{proof}

Finally, we observe that the equivalence of the conditions obtained in \cite{HSWW10} and the ones obtained in \cite{BHP14} for a dual integrable unitary orbit of a discrete abelian group to form an ortonormal, Riesz or frame systems, in terms of the Bracket maps can be checked directly in terms of the multiplier map $\Lambda$ as follows.
\begin{prop} \label{prop:LCA5}
Let $(\Gamma, \Pi, \Hil)$ be a dual integrable triple and suppose that $\Gamma$ is abelian. Given $\psi \in \Hil, \,\,\psi\neq0$, let $\chi_{\Omega_\psi} = \{ \alpha \in \widehat \Gamma : [\psi, \psi](\alpha) > 0 \}.$ For $0 < A \leq B < \infty,$ the following are equivalent:
\begin{itemize}
\item[i.] $A s_{[\psi, \psi]^{op}} \leq [\psi , \psi]^{op} \leq B s_{[\psi, \psi]^{op}}$
\item[ii.] $A \chi_{\Omega_\psi} (\alpha) \leq [\psi , \psi] (\alpha) \leq B \chi_{\Omega_\psi}(\alpha)\,, \mbox{a.e.} \  \alpha \in \widehat \Gamma\,.$
\end{itemize}
Here $s_{[\psi, \psi]^{op}}$ is the support of the operator $[\psi, \psi]^{op}$, as defined in \eqref{eq:support}.
\end{prop}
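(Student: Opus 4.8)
The plan is to transport the two operator inequalities in item~$i.$ through the multiplier map $\Lambda$ of Theorem~\ref{theo:Lpmult}, and to check that they become precisely the two a.e.\ function inequalities in item~$ii.$ The map $\Lambda:\vn(\Gamma)\to L^\infty(\wh\Gamma)$ is an isometric $*$-isomorphism; being a bijective $*$-homomorphism between $C^*$-algebras, both $\Lambda$ and $\Lambda^{-1}$ preserve positivity, so for $F_1,F_2\in\vn(\Gamma)$ one has $F_1\le F_2$ if and only if $\Lambda(F_1)\le\Lambda(F_2)$ a.e. First I would observe that each of $i.$ and $ii.$ already forces boundedness: if $i.$ holds then $[\psi,\psi]^{op}\le B\,s_{[\psi,\psi]^{op}}\le B\,\Id$, so $[\psi,\psi]^{op}\in\vn(\Gamma)$, while if $ii.$ holds then $[\psi,\psi]\le B$ a.e., so $[\psi,\psi]\in L^\infty(\wh\Gamma)$ and hence $[\psi,\psi]^{op}=\Lambda^{-1}([\psi,\psi])\in\vn(\Gamma)$. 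Thus in either direction we may work inside the $p=\infty$ isomorphism.

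Next I would make the two identifications needed to match terms. By \eqref{Brackets}, $\Lambda([\psi,\psi]^{op})=[\psi,\psi]$. Since $[\psi,\psi]^{op}$ is positive (it is the bracket of $\psi$ with itself and, by Theorem~\ref{prop:G=B}, agrees with the Gramian $\Grop_\orb\ge0$), positivity-preservation gives $[\psi,\psi]\ge0$ a.e., so $\Omega_\psi=\{[\psi,\psi]>0\}=\{[\psi,\psi]\ne0\}$ up to null sets. It then remains to prove the key identity
\[
\Lambda\big(s_{[\psi,\psi]^{op}}\big)=\chi_{\Omega_\psi}.
\]
Write $F=[\psi,\psi]^{op}$. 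Because $\Lambda$ is a unital $*$-homomorphism it sends the self-adjoint idempotent $s_F$ to a self-adjoint idempotent in $L^\infty(\wh\Gamma)$, i.e.\ to some $\chi_E$. By \eqref{eq:support}, $s_F$ is the minimal projection with $F s_F=F$; applying $\Lambda$ gives $[\psi,\psi]\,\chi_E=[\psi,\psi]$, and minimality transfers through $\Lambda$: if $\chi_{E'}$ is any characteristic function with $[\psi,\psi]\,\chi_{E'}=[\psi,\psi]$, then $\Lambda^{-1}(\chi_{E'})$ is a projection $p'$ with $Fp'=F$, whence $s_F\le p'$ and, by order-preservation, $\chi_E\le\chi_{E'}$. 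Hence $E$ is the minimal such set, namely $\{[\psi,\psi]\ne0\}=\Omega_\psi$ modulo null sets.

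With these identifications the proof concludes mechanically: applying $\Lambda$ to $A\,s_F\le F\le B\,s_F$ and using that $\Lambda$ is an order isomorphism yields $A\,\chi_{\Omega_\psi}(\alpha)\le[\psi,\psi](\alpha)\le B\,\chi_{\Omega_\psi}(\alpha)$ a.e., which is item~$ii.$; reversing the implications via $\Lambda^{-1}$ gives $i.$ from $ii.$ The only genuinely delicate point is the key identity $\Lambda(s_F)=\chi_{\Omega_\psi}$: one must be sure that $\Lambda$ carries the support projection of $F$ to the characteristic function of the support of the multiplier $\Lambda(F)$. I would justify this either by the algebraic minimality argument above or, more transparently, by recalling that $\Lambda=\operatorname{Ad}\F_\Gamma$ is spatially implemented by the unitary $\F_\Gamma$, hence a normal $*$-isomorphism which therefore preserves spectral projections and supports; under it $F$ becomes multiplication by $[\psi,\psi]$, whose support projection is exactly multiplication by $\chi_{\{[\psi,\psi]\ne0\}}$.
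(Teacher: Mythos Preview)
Your proof is correct and follows essentially the same route as the paper: both arguments transport the operator inequalities through $\Lambda$, relying on positivity preservation (the paper isolates this as Lemma~\ref{lem:inequality}) and on the key identity $\Lambda(s_F)=\chi_{\Omega_\psi}$ (the paper's Lemma~\ref{lem:LCA4}). Your justification of that identity via minimality transfer, or alternatively via spatial implementation by $\F_\Gamma$, is a slightly more conceptual variant of the paper's direct two-sided estimate, and your explicit remark that either hypothesis forces $[\psi,\psi]^{op}\in\vn(\Gamma)$ is a point the paper leaves implicit.
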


In order to prove Proposition \ref{prop:LCA5}, observe first that, as a consequence of (\ref{eq:Fourierintertwining}) and of Plancherel Theorem, for all $u \in \ell_2(\Gamma)$ and all $F \in \vn(\Gamma)$ we have
\begin{align*}
\langle F u, u\rangle_{\ell_2(\Gamma)} & = \langle \F_\Gamma F u, \F_\Gamma u\rangle_{L^2(\wh{\Gamma})} = \langle \Lambda(F) \F_\Gamma u, \F_\Gamma u\rangle_{L^2(\wh{\Gamma})}\\
& = \int_{\wh{\Gamma}} \Lambda(F)(\alpha) |\F_\Gamma u(\alpha)|^2 d\alpha .
\end{align*}
This provides a simple argument for the proof of the following useful lemma, which could be seen actually as a consequence of the Spectral Theorem.
\begin{lem}\label{lem:inequality}
Let $\Gamma$ be a discrete and countable abelian group.
If $F \in \vn(\Gamma)$ is a selfadjoint operator on $\ell_2(\Gamma)$, then
$$
F \geq 0 \quad \iff \quad \Lambda(F)(\alpha) \geq 0 \quad \textnormal{a.e.} \ \alpha \in \wh{\Gamma}.
$$
\end{lem}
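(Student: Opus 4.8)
The plan is to rely entirely on the integral identity established immediately before the statement, namely that for every $u \in \ell_2(\Gamma)$ and every $F \in \vn(\Gamma)$ one has $\langle Fu, u\rangle_{\ell_2(\Gamma)} = \int_{\wh{\Gamma}} \Lambda(F)(\alpha)\,|\F_\Gamma u(\alpha)|^2\, d\alpha$. Before using it I would record one preliminary fact: since $F$ is selfadjoint and $\Lambda$ satisfies $\Lambda(F^*) = \overline{\Lambda(F)}$ (established above for $\Lambda$), the function $\Lambda(F)$ is real-valued a.e., so that the two inequalities $\Lambda(F) \geq 0$ and $F \geq 0$ are meaningful to compare and the set on which $\Lambda(F)$ is negative is well defined.

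The implication from $\Lambda(F) \geq 0$ to $F \geq 0$ is immediate: if $\Lambda(F)(\alpha) \geq 0$ a.e., then the integrand $\Lambda(F)(\alpha)\,|\F_\Gamma u(\alpha)|^2$ is nonnegative a.e., so the integral, and hence $\langle Fu, u\rangle_{\ell_2(\Gamma)}$, is nonnegative for every $u \in \ell_2(\Gamma)$; by definition this is exactly $F \geq 0$.

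For the converse, the key observation is that $\F_\Gamma : \ell_2(\Gamma) \to L^2(\wh{\Gamma})$ is unitary (Plancherel), so as $u$ runs over $\ell_2(\Gamma)$ the function $g = \F_\Gamma u$ runs over all of $L^2(\wh{\Gamma})$, and consequently $|g|^2$ runs over all nonnegative functions in $L^1(\wh{\Gamma})$, since any nonnegative $h \in L^1(\wh{\Gamma})$ factors as $|\sqrt{h}|^2$ with $\sqrt{h} \in L^2(\wh{\Gamma})$. Assuming $F \geq 0$, the identity then yields $\int_{\wh{\Gamma}} \Lambda(F)(\alpha)\, h(\alpha)\, d\alpha \geq 0$ for every nonnegative $h \in L^1(\wh{\Gamma})$. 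To conclude that $\Lambda(F) \geq 0$ a.e. I would test against $h = \chi_E$, where $E = \{\alpha \in \wh{\Gamma} : \Lambda(F)(\alpha) < 0\}$; this set is measurable because $\Lambda(F)$ is real-valued and in $L^\infty(\wh{\Gamma})$, and $\chi_E \in L^1(\wh{\Gamma})$ because $\wh{\Gamma}$ is compact of finite normalized Haar measure (and indeed $\chi_E = |\chi_E|^2$ is a square modulus, so it is realized by some $u$). Then $\int_E \Lambda(F)\, d\alpha \geq 0$, while the integrand is strictly negative on $E$, forcing the measure of $E$ to be zero.

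There is no serious obstacle here; the only point requiring a moment of care is the surjectivity step in the converse, that one really can realize every nonnegative $L^1$ weight as $|\F_\Gamma u|^2$, which rests on the Plancherel theorem for $\F_\Gamma$ recorded in the preliminaries together with the elementary factorization $h = |\sqrt{h}|^2$. One should also not neglect to invoke selfadjointness at the outset, since without knowing that $\Lambda(F)$ is real the set $E$ and the whole comparison would be meaningless.
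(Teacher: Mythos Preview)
Your proof is correct and follows exactly the route the paper indicates: the paper derives the identity $\langle Fu,u\rangle_{\ell_2(\Gamma)} = \int_{\wh{\Gamma}} \Lambda(F)(\alpha)|\F_\Gamma u(\alpha)|^2\,d\alpha$ and then simply remarks that this ``provides a simple argument'' for the lemma, without spelling out the details. You have filled in precisely those details---the real-valuedness of $\Lambda(F)$ via $\Lambda(F^*)=\overline{\Lambda(F)}$, the immediate forward direction, and the converse via the surjectivity of $\F_\Gamma$ onto $L^2(\wh{\Gamma})$ tested against $\chi_E$---so your argument is the intended one.
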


A second general fact that is needed is that $\Lambda$ maps the support of a self-adjoint operator in $L^1(\vn(\Gamma))$ to a characteristic function.
\begin{lem} \label{lem:LCA4}
Let $\Gamma$ be a discrete and countable abelian group and suppose that $F$ is a self-adjoint positive operator in $L^1(\vn(\Gamma)).$ Let $\Omega_F = \{\alpha \in \widehat \Gamma : \Lambda(F)(\alpha) >0\}.$ Then
$$
\Lambda (s_F) (\alpha) = \chi_{\Omega_F} (\alpha)\,, \qquad \mbox{a.e.} \  \alpha \in \widehat \Gamma.
$$
\end{lem}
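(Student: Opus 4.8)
The plan is to show that $\Lambda(s_F)$ is the indicator of exactly the set where $\Lambda(F)$ is strictly positive, by first identifying $\Lambda(s_F)$ as an indicator function and then pinning down its support through the two defining properties of $s_F$.

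First I would note that $s_F \in \vn(\Gamma)$ is a self-adjoint orthogonal projection, so $s_F = s_F^* = s_F^2$. Applying the isometric $*$-homomorphism $\Lambda : \vn(\Gamma) \to L^\infty(\wh{\Gamma})$ gives $\overline{\Lambda(s_F)} = \Lambda(s_F) = \Lambda(s_F)^2$, so that $\Lambda(s_F)$ is a real-valued idempotent in $L^\infty(\wh{\Gamma})$; hence $\Lambda(s_F) = \chi_E$ for some measurable $E \subset \wh{\Gamma}$, determined up to null sets. It then remains to prove $E = \Omega_F$ a.e. Before that, I would record that $\Lambda(F) \geq 0$ a.e.: since $F$ is positive in $L^1(\vn(\Gamma))$ we may write $F = G^2$ with $G = F^{1/2} \in L^2(\vn(\Gamma))$ self-adjoint, and then the H\"older homomorphism property \eqref{eq:Holderhomomorphism} of Theorem \ref{theo:Lpmult} with $p = q = 2$ yields $\Lambda(F) = \Lambda(G)^2$, which is nonnegative because $\Lambda(G)$ is real. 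Thus $\Omega_F = \{\alpha : \Lambda(F)(\alpha) > 0\}$ is meaningful.

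For the inclusion $\Omega_F \subset E$, I would use the identity $F = F s_F$ from \eqref{eq:support}. Applying $\Lambda$ and invoking \eqref{eq:Holderhomomorphism} with $p = 1$, $q = \infty$ gives $\Lambda(F) = \Lambda(F)\,\Lambda(s_F) = \Lambda(F)\,\chi_E$, so $\Lambda(F)$ vanishes a.e.\ off $E$; since $\Lambda(F) \geq 0$, this forces $\Omega_F \subset E$ up to null sets. For the reverse inclusion I would exploit the \emph{minimality} of $s_F$. Set $q = \Lambda^{-1}(\chi_{\Omega_F}) \in \vn(\Gamma)$; as $\chi_{\Omega_F}$ is a real idempotent and $\Lambda$ is a $*$-isomorphism, $q$ is a self-adjoint projection. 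Because $\Lambda(F) \geq 0$ and $\Omega_F$ is exactly its positivity set, one has $\Lambda(F)\chi_{\Omega_F} = \Lambda(F)$, hence $\Lambda(Fq) = \Lambda(F)$ by \eqref{eq:Holderhomomorphism}; the injectivity of $\Lambda$ on $L^1(\vn(\Gamma))$ from Theorem \ref{theo:Lpmult} then gives $Fq = F$, and by self-adjointness also $qF = F$. Since $s_F$ is the minimal orthogonal projection with $F = Fs_F = s_F F$, we conclude $s_F \leq q$. Applying the order-preserving map $\Lambda$ (positivity of $\Lambda(q - s_F)$ via Lemma \ref{lem:inequality}) gives $\chi_E = \Lambda(s_F) \leq \Lambda(q) = \chi_{\Omega_F}$, i.e.\ $E \subset \Omega_F$ up to null sets. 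Combining the two inclusions yields $\Lambda(s_F) = \chi_{\Omega_F}$ a.e.

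The main obstacle I anticipate is the bookkeeping around the fact that $F$ lies only in $L^1(\vn(\Gamma))$ and need not be bounded: the elementary $\vn(\Gamma)$ homomorphism and Lemma \ref{lem:inequality} apply directly only to bounded operators, so the products $F s_F$ and $Fq$, as well as the positivity of $\Lambda(F)$, must all be handled through the H\"older homomorphism \eqref{eq:Holderhomomorphism} and the $L^2$ square-root factorization, together with the $L^1$ injectivity of $\Lambda$. Once these identifications are made rigorous, the minimality argument reduces the lemma to the two inclusions above.
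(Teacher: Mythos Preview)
Your proposal is correct and follows essentially the same route as the paper: both arguments establish the two inclusions by (a) using $F = F s_F$ together with the H\"older homomorphism \eqref{eq:Holderhomomorphism} to see that $\Lambda(F)$ vanishes where $\Lambda(s_F)$ does, and (b) pulling back $\chi_{\Omega_F}$ to a projection $q \in \vn(\Gamma)$, checking $Fq = F$, and invoking the minimality of $s_F$ to get $s_F \leq q$, hence $\Lambda(s_F) \leq \chi_{\Omega_F}$ via Lemma \ref{lem:inequality}. Your version adds two pieces of bookkeeping the paper leaves implicit --- identifying $\Lambda(s_F)$ up front as a characteristic function $\chi_E$, and justifying $\Lambda(F) \geq 0$ a.e.\ through the $L^2$ square-root factorization and \eqref{eq:Holderhomomorphism} --- but the core argument is the same.
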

\begin{proof}
Since $\chi_{\Omega_F} \in L^\infty (\widehat \Gamma)$ we can choose $q \in \vn(\Gamma)$ such that $\Lambda (q) = \chi_{\Omega_F}.$ Using the properties of $\Lambda$, it is easy to show that $q^2 = q$ and $q^* = q$; thus $q$ is a projection. Moreover $ \Lambda (F q) = \Lambda (F) \Lambda (q) = \Lambda (F) \chi_{\Omega_F} = \Lambda (F)$, as functions in $L^1(\widehat \Gamma)$. Since $s_F$ is the support of $F$, we deduce $s_F \leq q.$ Hence, $q - s_F$ is a positive self-adjoint operator in $\vn(\Gamma)$, and by Lemma \ref{lem:inequality}, $\Lambda (s_F) (\alpha) \leq \Lambda (q)(\alpha) = \chi_{\Omega_F}(\alpha)$ a.e. $\alpha \in \widehat \Gamma$.

On the other hand,  by definition of support, we have $\Lambda (F) = \Lambda (F s_F) = \Lambda (F) \Lambda (s_F)$, so that 
$\Lambda (F)(\alpha) [ \Lambda (s_F)(\alpha) -1] = 0$ a.e. $\alpha \in \widehat \Gamma.$ Thus, $\Lambda (s_F)(\alpha) -1 = 0$ when $\Lambda (F)(\alpha) >0$.This implies $\Lambda (s_F)(\alpha) \geq \chi_{\Omega_F}(\alpha)$ a.e. $\alpha \in \widehat \Gamma$ since, by Lemma \ref{lem:inequality}, we know that $\Lambda(s_F)\geq0$ a.e. $\alpha \in \widehat \Gamma$.
\end{proof}

\begin{proof}[Proof of Proposition \ref{prop:LCA5}]
Assuming $i.$, the operators $[\psi , \psi]^{op} - A s_{[\psi, \psi]^{op}}$ and $B s_{[\psi, \psi]^{op}} - [\psi , \psi]^{op}$ are self-adjoint and positive elements of $\vn(\Gamma)$. By Lemma \ref{lem:inequality} we then have
$$
A\, \Lambda (s_{[\psi, \psi]^{op}})(\alpha) \leq \Lambda ([\psi , \psi]^{op}) (\alpha) \leq B\,\Lambda (s_{[\psi, \psi]^{op}})(\alpha)\,, \quad \mbox{a.e.} \ \alpha \in \widehat \Gamma\,.
$$
By (\ref{Brackets}) and Lemma \ref{lem:LCA4}, this is equivalent to
$$
A\, \chi_{\Omega_{[\psi, \psi]^{op}}}(\alpha) \leq [\psi , \psi] (\alpha) \leq B\,\chi_{\Omega_{[\psi, \psi]^{op}}}(\alpha)\,, \quad \mbox{a.e.} \ \alpha \in \widehat \Gamma\,.
$$
But, by Lemma \ref{lem:LCA4} again, we have $\Omega_{[\psi, \psi]^{op}} = \Omega_{\psi}$, which proves $ii.$.
An easy adaptation of this same argument allows to prove that $ii. \Rightarrow i.$.
\end{proof}

\

\paragraph{Acknowledgements.} D. Barbieri was supported by a Marie Curie Intra Euro\-pean Fellowship (N. 626055) within the 7th European Community Framework Programme. D. Barbieri and E. Hern\'andez were supported by Grant MTM2013-40945-P (Ministerio de Econom\'ia y Competitividad, Spain). V. Paternostro by Grants UBACyT  2002013010022BA and CONICET-PIP 11220110101018.

\newpage

\noindent
\textbf{Davide Barbieri}\\
Universidad Aut\'onoma de Madrid, 28049 Madrid, Spain\\
\href{mailto:davide.barbieri@uam.es}{\tt davide.barbieri@uam.es}

\

\noindent
\textbf{Eugenio Hern\'andez}\\
Universidad Aut\'onoma de Madrid, 28049 Madrid, Spain\\
\href{mailto:eugenio.hernandez@uam.es}{\tt eugenio.hernandez@uam.es}

\

\noindent
\textbf{Victoria Paternostro}\\
Universidad de Buenos Aires and 
IMAS-CONICET, Consejo  Nacional de Investigaciones Cient\'ificas y T\'ecnicas, 1428 Buenos Aires,  Argentina\\
\href{mailto:vpater@dm.uba.ar}{\tt vpater@dm.uba.ar}

\end{document}